\documentclass[11pt]{amsart}

\usepackage[foot]{amsaddr}
\usepackage[english]{babel}


\allowdisplaybreaks

\usepackage[
backend=biber,
style=numeric,
giveninits=false,   
maxbibnames=99,     
minbibnames=99,
maxcitenames=99,
mincitenames=99,
doi=true,
url=false,
isbn=false
]{biblatex}

\DeclareFieldFormat[article]{title}{#1}

\DeclareFieldFormat{journaltitle}{\mkbibemph{#1}}

\DeclareFieldFormat[article]{volume}{\mkbibbold{#1}}

\DeclareFieldFormat{pages}{#1}

\DeclareFieldFormat{eid}{Article number\space #1}


\DeclareFieldFormat{doi}{\mkbibacro{DOI}\addcolon\space\url{https://doi.org/#1}}

\DeclareBibliographyDriver{article}{%
	\usebibmacro{bibindex}%
	\usebibmacro{begentry}%
	\printnames{author}%
	\setunit{\addperiod\space}%
	\printfield{title}%
	\setunit{\addperiod\space}%
	\printfield{journaltitle}%
	\setunit{\addspace}%
	\printfield{volume}%
	\setunit{\addspace}%
	\printtext[parens]{\printdate}%
	\setunit{\addspace}%
	\iffieldundef{pages}
	{\printfield{eid}}
	{\printfield{pages}}%
	\newunit\newblock
	\printfield{doi}%
	\addperiod%
	\usebibmacro{finentry}%
}
\addbibresource{references_with_doi.bib}

\addbibresource{references_with_doi.bib}

\renewbibmacro*{in:}{}

\usepackage[
pdftex,hyperfootnotes]{hyperref}
\hypersetup{
 colorlinks=true,
 linkcolor=NavyBlue, 
 urlcolor=RoyalPurple,
 citecolor=OliveGreen,
 pdftitle={Unbounded banded matrices and mixed-type multiple orthogonality},
 bookmarks=true,
}

\usepackage{doi}
\usepackage{xurl}

\usepackage[usenames,dvipsnames,svgnames,table,x11names]{xcolor}
\usepackage{pgfplots}
\usepackage{tikz}
\usepackage{tikz-3dplot}
\usetikzlibrary{automata,quotes, chains,matrix,calc,shadows,shapes.callouts,shapes.geometric,shapes.misc,positioning,patterns,decorations.shapes,
 decorations.pathmorphing,decorations.markings,decorations.fractals,decorations.pathreplacing,shadings,fadings,arrows.meta,bending}

\usepackage{nicematrix,drawmatrix}
\usepackage[utf8]{inputenc}

\usepackage{comment}
\usepackage[textwidth=17.5cm,textheight=22.275cm,
height=
24.275cm,
width=18cm]{geometry}

\usepackage{amssymb,latexsym,amsmath,amsthm,bm}
\usepackage{mathrsfs}
\usepackage{mathtools,arydshln,mathdots}
\mathtoolsset{showonlyrefs}
\usepackage{tcolorbox}

\usepackage[table]{xcolor}

\usepackage{Baskervaldx}
\usepackage[]{newtxmath}

%




\usepackage{enumerate}
\usepackage{bigints}






\theoremstyle{plain}

\newtheorem{teo}{Theorem}[section]

\newtheorem{lemma}[teo]{Lemma}

\renewcommand{\d}{\operatorname{d}}

\newcommand{\diag}{\operatorname{diag}}

\newcommand{\C}{\mathbb{C}}

\newcommand{\N}{\mathbb{N}}
\newcommand{\R}{\mathbb{R}}

\allowdisplaybreaks[1]

\makeatletter
\DeclareRobustCommand{\gaussk}{\DOTSB\gaussk@\slimits@}
\newcommand{\gaussk@}{\mathop{\vphantom{\sum}\mathpalette\bigcal@{K}}}

\newcommand{\bigcal@}[2]{%
 \vcenter{\m@th
  \sbox\z@{$#1\sum$}%
  \dimen@=\dimexpr\ht\z@+\dp\z@
  \hbox{\resizebox{!}{0.8\dimen@}{$\mathcal{K}$}}%
 }%
}
\newcommand{\cfracplus}{\mathbin{\cfracplus@}}
\newcommand{\cfracplus@}{%
 \sbox\z@{$\dfrac{1}{1}$}%
 \sbox\tw@{$+$}%
 \raisebox{\dimexpr\dp\tw@-\dp\z@\relax}{$+$}%
}
\newcommand{\cfracdots}{\mathord{\cfracdots@}}
\newcommand{\cfracdots@}{%
 \sbox\z@{$\dfrac{1}{1}$}%
 \sbox\tw@{$+$}%
 \raisebox{\dimexpr\dp\tw@-\dp\z@\relax}{$\cdots$}%
}
\makeatother

\makeatletter
\newcommand*{\relrelbarsep}{.386ex}
\newcommand*{\relrelbar}{%
 \mathrel{%
  \mathpalette\@relrelbar\relrelbarsep
 }%
}
\newcommand*{\@relrelbar}[2]{%
 \raise#2\hbox to 0pt{$\m@th#1\relbar$\hss}%
 \lower#2\hbox{$\m@th#1\relbar$}%
}
\providecommand*{\rightrightarrowsfill@}{%
 \arrowfill@\relrelbar\relrelbar\rightrightarrows
}
\providecommand*{\leftleftarrowsfill@}{%
 \arrowfill@\leftleftarrows\relrelbar\relrelbar
}
\providecommand*{\xrightrightarrows}[2][]{%
 \ext@arrow 0359\rightrightarrowsfill@{#1}{#2}%
}
\providecommand*{\xleftleftarrows}[2][]{%
 \ext@arrow 3095\leftleftarrowsfill@{#1}{#2}%
}
\makeatother



\catcode`,\active

\catcode`\,12

\usepackage{appendix}




\usepackage{xcolor} 

\usepackage[normalem]{ulem}
\usepackage{soul} 

\usepackage{comment} 

\usepackage{tikz}
\usetikzlibrary{shapes,arrows}
\usepackage{verbatim}

\tikzstyle{block} = [draw, rectangle, 
minimum height=3em, minimum width=2em]

\usepackage{mathrsfs} 



\begin{document}

\title[Unbounded banded matrices and mixed-type multiple orthogonality]
{Unbounded banded matrices, shifted positive bidiagonal factorizations, and mixed-type multiple orthogonality}

\author[A Branquinho]{Amílcar Branquinho$^{1}$}
\address{$^1$Departamento de Matemática,
 Universidade de Coimbra, 3001-454 Coimbra, Portugal}
\email{ajplb@mat.uc.pt}

\author[A Foulquié-Moreno]{Ana Foulquié-Moreno$^{2}$}
\address{$^2$Departamento de Matemática, Universidade de Aveiro, 3810-193 Aveiro, Portugal}
\email{foulquie@ua.pt}

\author[M Mañas]{Manuel Mañas$^{3}$}
\address{$^3$Departamento de Física Teórica, Universidad Complutense de Madrid, Plaza Ciencias 1, 28040-Madrid, Spain}
\email{manuel.manas@ucm.es}

\keywords{
	unbounded banded matrices;
	positive bidiagonal factorization;
	oscillatory matrices;
	total positivity;
	spectral Favard theorem;
	matrix-valued measures;
	Gauss-type quadrature;
	Helly selection principle;
	mixed-type multiple orthogonal polynomials;
	Christoffel numbers
}

\subjclass[2020]{33C47, 42C05, 47B36, 47A10, 15B48}

\enlargethispage{1.25cm}

\begin{abstract}
This work extends Favard-type spectral representations for banded matrices $T$ beyond the bounded setting. It assumes that, for every $N\in\N_0$, there exists a shift $s_N\ge 0$ such that the shifted truncation $A_N\coloneq T^{[N]}+s_N I_{N+1}$ admits a positive bidiagonal factorization (PBF). Allowing $s_N$ to depend on $N$ leads to a natural recentering step: the discrete Gauss-type quadrature measures associated with $A_N$ are translated by $x\mapsto x-s_N$, producing a uniformly bounded family of distribution functions. Combining moment stabilization for banded truncations with Helly-type compactness theorems yields a limiting matrix-valued measure, together with a Favard-type spectral representation and the corresponding mixed-type multiple biorthogonality relations. As a consequence, the classical Favard theorem for (possibly unbounded) Jacobi matrices is recovered as a special case. Indeed, for a tridiagonal $J$ with positive sub- and superdiagonals, each truncation $J^{[N]}$ admits a shift $s_N\ge 0$ such that $J^{[N]}+s_N I_{N+1}$ is oscillatory and therefore admits a PBF. The preceding construction then produces the usual spectral measure for $J$.
\end{abstract}
 
 \maketitle
 
 

\thispagestyle{empty}
 \section{Introduction}
The spectral Favard theorem for banded matrices admitting a positive bidiagonal factorization
was established in the bounded case in \cite{BFM1}. In that work, boundedness makes it possible
to construct a matrix-valued spectral measure and to derive a Gaussian quadrature formula,
which becomes a central tool in the proof. Further structural properties, including total
positivity and the normality (maximal degree pattern) of the associated mixed multiple
orthogonal polynomials, were analyzed in \cite{BFM4}, still under the boundedness hypothesis.

The purpose of the present paper is to remove the boundedness assumption. In the unbounded setting,
the positive bidiagonal factorization cannot, in general, be imposed directly at the level of the
semi-infinite matrix. Accordingly, we work with finite truncations and assume that for every
$N\in\N_0$ there exists a shift $s_N\ge 0$ such that the shifted truncation $T^{[N]}+s_N I_{N+1}$
admits a positive bidiagonal factorization. Under this hypothesis, and keeping the banded structure
of $T$, we prove that the spectral Favard theorem remains valid in the unbounded case. The argument
builds on the constructive framework introduced in \cite{BFM1} and follows the same discrete-to-limit
philosophy: one constructs discrete matrix-valued measures at the truncation level, and then passes
to a limiting measure as the truncation size tends to infinity.

More specifically, for each finite truncation one relies on the positive bidiagonal factorization,
on the spectral properties of oscillatory matrices, and on the sign-variation structure of their
eigenvectors \cite{Gantmacher,Fallat}. In the banded setting this yields positivity of the associated
Christoffel numbers and Gauss-type quadrature formulas at the discrete level \cite{BFM1}. The main
difficulty in the unbounded regime is that the shifts $s_N$ may depend on $N$, so the discrete measures
must be recentered before taking limits. Following the strategy developed by Chihara for orthogonal
polynomials \cite[Chap. 2]{Chihara} and using a Helly-type selection principle, we combine the discrete
quadrature formulas with uniform boundedness of the translated distribution functions to extract a
convergent subsequence of measures and identify its limit. This provides the matrix-valued spectral
measure and completes the proof of the spectral Favard theorem without boundedness.

A further point concerns the degree pattern of the mixed multiple orthogonal polynomials. As shown
in \cite{BFM4}, normality (maximal degrees along the step-line) is established before taking the
large-$N$ limit. In both the bounded and unbounded settings, the required initial conditions must be
constructed from the positive bidiagonal factorization. There is a natural freedom in this choice,
which can be parametrized by families of positive triangular matrices, or more restrictively by totally
positive triangular matrices (all nontrivial minors are positive). The latter choice guarantees that maximal degrees are attained, exactly as in the normality result proved in~\cite{BFM4}.

Totally nonnegative and totally positive matrices constitute a fundamental class of operators with deep
connections to oscillation theory, approximation theory, and spectral analysis. Their basic properties
were established in the seminal work of Gantmacher and Krein \cite{Gantmacher}, and have since been
developed extensively, see for example the monographs of Karlin \cite{Karlin} and Pinkus \cite{Pinkus},
as well as the modern reference by Fallat and Johnson \cite{Fallat}. Of particular relevance in the
present context are oscillatory matrices, whose spectra consist of simple positive eigenvalues and
whose eigenvectors exhibit a precise sign variation structure. These features make totally nonnegative
matrices especially well adapted to the spectral analysis of banded operators admitting positive
bidiagonal factorizations, and provide the key mechanism ensuring the positivity of Christoffel-type
coefficients and the normality of the associated families of multiple orthogonal polynomials.

Multiple orthogonal polynomials arise as natural generalizations of classical orthogonal polynomials
when orthogonality conditions are imposed with respect to several measures. They play a central role in
rational approximation theory, number theory, random matrix models, and integrable systems. A systematic
account of the theory can be found in the monograph of Nikishin and Sorokin \cite{nikishin_sorokin}, as
well as in the classical treatment of Ismail \cite{Ismail}. From a modern perspective, multiple orthogonal
polynomials are most naturally characterized through recurrence relations associated with banded operators,
leading to matrix-valued and vector-valued spectral problems. In this framework, polynomials of mixed type
emerge as the appropriate objects encoding the joint left--right spectral structure of banded matrices, see
for instance \cite{andrei_walter,afm}. The mixed-type setting is particularly well suited for the study of
non-symmetric banded operators, where biorthogonality and matrix-valued measures replace the scalar
orthogonality of the classical theory.
 
\subsection{Banded matrices and mixed-type multiple orthogonal polynomials}

Let us define the recursion polynomials associated with the banded matrix
\begin{align}\label{eq:monic_Hessenberg}
 T&=
 \left[ \begin{NiceMatrix}
  T_{0,0} &\Cdots &&T_{0,q}& 0 & \Cdots &&\\
  \Vdots& &&&\Ddots &\Ddots& &\\
  T_{p,0}&&&&&&&\\
  0&\Ddots[shorten-end=15pt]&&&&&&\\
  \Vdots[shorten-end=5pt]&\Ddots[shorten-end=5pt]&&&&&&\\
  &&&&&&&\\
  &&&&&&&\\[6pt]
  &&&&&& &
 \end{NiceMatrix}\right]
\end{align}
as the entries of semi-infinite left and right eigenvectors,
\begin{align*}
\begin{aligned}
 A^{(a)}&=\left[\begin{NiceMatrix}
  A^{(a)}_0 &  A^{(a)}_1& \Cdots
 \end{NiceMatrix}\right], & a&\in\{1,\dots, p\},&
 B^{(b)}&=\begin{bNiceMatrix}
  B^{(b)}_0 \\[2pt] B^{(b)}_1\\ \Vdots
 \end{bNiceMatrix}, & b&\in\{1,\dots, q\},
\end{aligned}
\end{align*}
which satisfy the eigenvalue relations
\begin{align*}
\begin{aligned}
 A^{(a)}T&=xA^{(a)},& a&\in\{1,\dots, p\},&
 TB^{(b)}&=xB^{(b)}, &b&\in\{1,\dots, q\}.
\end{aligned}
\end{align*}

Throughout we assume that the semi-infinite matrix $T$ admits a normalized positive bidiagonal factorization (PBF),
\begin{align}\label{eq:bidiagonal}
 T= L_{1} \cdots L_{p} \Delta U_q\cdots U_1,
\end{align}
where $\Delta=\diag(\Delta_0,\Delta_1,\dots)$ and the bidiagonal factors are normalized (their diagonal entries equal $1$) and given by
\begin{align}\label{eq:bidiagonal_factors}
 \begin{aligned}
  L_k&\coloneq \left[\begin{NiceMatrix}[columns-width=auto]
   1 &0&\Cdots&\\
   L_{k|1,0} & 1 &\Ddots&\\
   0& L_{k|2,1}& 1& \\
   \Vdots[shorten-start=5pt,shorten-end=7pt] &\Ddots& \Ddots& \Ddots\\&&&
  \end{NiceMatrix}\right], & 
  U_k& \coloneq
  \left[\begin{NiceMatrix}[columns-width = auto]
   1& U_{k|0,1}&0&\Cdots&\\
   0& 1& U_{k|1,2}&\Ddots&\\
   \Vdots[shorten-end=5pt]&\Ddots[shorten-end=17pt]&1&\Ddots&\\
   & &\Ddots[shorten-end=12pt] &\Ddots[shorten-end=8pt] &\\&&&&
  \end{NiceMatrix}\right], 
 \end{aligned}
\end{align}
so that the positivity constraints $L_{k|i,j},U_{k|i,j},\Delta_i>0$, for $i\in\N_0$, hold.

The components of the eigenvectors are polynomials in the spectral parameter \(x\); we refer to them as the left and right recursion polynomials. They are fixed by the initial conditions
\begin{align}
 \label{eq:initcondtypeI}
 \begin{aligned}
  \begin{cases}
   A^{(1)}_0=1 , \\
   A^{(1)}_1= \nu^{(1)}_1 , \\
   \hspace{.895cm} \vdots \\
   A^{(1)}_{p-1}=\nu^{(1)}_{p-1} ,
  \end{cases}
  &&
  \begin{cases}
   A^{(2)}_0=0 , \\
   A^{(2)}_1= 1 , \\
   A^{(2)}_2= \nu^{(2)}_2 , \\
   \hspace{.895cm} \vdots \\
   A^{(2)}_{p-1}=\nu^{(2)}_{p-1} ,
  \end{cases}
  && \cdots &&
  \begin{cases}
   A^{(p)}_0 =0 , \\
   \hspace{.915cm} \vdots \\
   A^{(p)}_{p-2} = 0 , \\
   A^{(p)}_{p-1} = 1,
  \end{cases}
 \end{aligned}
\end{align}
with \( \nu^{(i)}_{j} \) arbitrary constants, and
\begin{align}
 \label{eq:initcondtypeII}
 \begin{aligned}
  \begin{cases}
   B^{(1)}_0=1 , \\
   B^{(1)}_1= \xi^{(1)}_1 , \\
   \hspace{.895cm} \vdots \\
   B^{(1)}_{q-1}=\xi^{(1)}_{q-1} ,
  \end{cases}
  &&
  \begin{cases}
   B^{(2)}_0=0 , \\
   B^{(2)}_1= 1 , \\
   B^{(2)}_2= \xi^{(2)}_2 , \\
   \hspace{.895cm} \vdots \\
   B^{(2)}_{q-1}=\xi^{(2)}_{q-1} ,
  \end{cases}
  && \cdots &&
  \begin{cases}
   B^{(q)}_0 =0 , \\
   \hspace{.915cm} \vdots \\
   B^{(q)}_{q-2} = 0 , \\
   B^{(q)}_{q-1} = 1,
  \end{cases}
 \end{aligned}
\end{align}
where \( \xi^{(i)}_{j} \) are also arbitrary. We collect these parameters into the initial condition matrices
\begin{align}
 \label{eq:ic}
 \begin{aligned}
  \nu&\coloneq \begin{bNiceMatrix}
   1& 0 & \Cdots& && 0 \\
   \nu^{(1)}_1 & 1 & \Ddots&& & \Vdots \\
   \Vdots & \Ddots[shorten-start=-3pt,shorten-end=-3pt] & \Ddots& && \\
   &&&& &\\&&&&&0\\
   \nu^{(1)}_{p-1} &\Cdots& && \nu^{(p-1)}_{p-1}& 1 
  \end{bNiceMatrix} ,&
  \xi&\coloneq \begin{bNiceMatrix}
   1& 0 & \Cdots& && 0 \\
   \xi^{(1)}_1 & 1 & \Ddots&& & \Vdots \\
   \Vdots & \Ddots[shorten-start=-3pt,shorten-end=-3pt] & \Ddots& && \\
   &&&& &\\&&&&&0\\
   \xi^{(1)}_{q-1} &\Cdots& && \xi^{(q-1)}_{q-1}& 1 
  \end{bNiceMatrix}.
 \end{aligned}
\end{align}

The recursion polynomials are uniquely determined by \eqref{eq:initcondtypeI}--\eqref{eq:initcondtypeII} together with the banded recursions
\begin{align}\label{eq:recursion_dual_A}
 A^{(a)}_{n-q} T_{n-q,n}+ \cdots +A^{(a)}_{n+p} T_{n+p,n}&= x A^{(a)}_{n}, & n &\in\{0,1,\ldots\}, & a &\in\{1,\dots, p\}, & A_{-q}^{(a)} &=\dots=A^{(a)}_{-1}= 0,\\
 \label{eq:recursion_B}
 T_{n,n-p}B^{(b)}_{n-p} + \cdots + T_{n,n+q}B^{(b)}_{n+q} &= x B^{(b)}_{n}, & n &\in\{0,1,\ldots\}, & b&\in\{1,\dots, q\}, & B_{-p}^{(b)} &=\dots=B^{(b)}_{-1}= 0.
\end{align}

For the semi-infinite matrix \(T\) we denote by \(P_N(x)\) the characteristic polynomial of the truncation \(T^{[N-1]}\),
namely
\begin{align*}
 P_{N}(x)&\coloneq\begin{cases}
  1, & N=0,\\\det\big(xI_N-T^{[N-1]}\big), & N\in\N.
 \end{cases}
\end{align*}

Introduce the blocks of left and right recursion polynomials
\begin{align*}
\begin{aligned}
 A_N &\coloneq \begin{bNiceMatrix}
  A^{(1)}_N& \Cdots & A^{(1)}_{N+p-1}  \\[2pt]
  \Vdots & & \Vdots \\[2pt]
  A^{(p)}_N & \Cdots & A^{(p)}_{N+p-1}
 \end{bNiceMatrix}, & 
 B_N &\coloneq \begin{bNiceMatrix}
  B^{(1)}_N & \Cdots & B^{(q)}_N \\[2pt]
  \Vdots & & \Vdots \\[2pt]
  B^{(1)}_{N+q-1} & \Cdots & B^{(q)}_{N+q-1}
 \end{bNiceMatrix},& N&\in \N_0 ,
\end{aligned}
\end{align*}
and the products
\begin{align*}
\begin{aligned}
 \alpha_N &\coloneq (-1)^{(p-1)N}T_{p,0}\cdots T_{N+p-1,N-1}, &
 \beta_N &\coloneq (-1)^{(q-1)N}T_{0,q}\cdots T_{N-1,N+q-1} , & N&\in \N,
\end{aligned}
\end{align*}
with \( \alpha_0=\beta_0=1 \). Since the entries on the extreme diagonals are nonzero, we have
\( \alpha_N,\beta_N\neq 0 \).
For \( N\in\N_0 \) these quantities relate the characteristic polynomials and the recursion blocks through
\begin{align*}
 P_N(x)&=\alpha_N\det A_N(x)=\beta_N\det B_N(x).
\end{align*}

Next we introduce determinantal polynomials, built from the left and right recursion polynomials, which produce left and right eigenvectors of $T^{[N]}$. Define
\begin{align}\label{eq:QNn}
 \begin{aligned}
  Q_{n,N}&\coloneq\begin{vNiceMatrix}
   A^{(1)}_{n} & \Cdots[shorten-start=-3pt] & A^{(p)}_{n} \\[2pt]
   A^{(1)}_{N+1} & \Cdots[shorten-start=-1pt] & A^{(p)}_{N+1} \\[2pt]
   \Vdots & & \Vdots \\[2pt]
   A^{(1)}_{N+p-1} & \Cdots[shorten-start=-3pt] & A^{(p)}_{N+p-1}
  \end{vNiceMatrix},&
  R_{n,N}&\coloneq\begin{vNiceMatrix}
   B^{(1)}_{n} & \Cdots[shorten-start=-3pt] & B^{(q)}_{n} \\[2pt]
   B^{(1)}_{N+1} & \Cdots[shorten-start=-1pt] & B^{(q)}_{N+1} \\[2pt]
   \Vdots & & \Vdots \\[2pt]
   B^{(1)}_{N+q-1} & \Cdots[shorten-start=-3pt] & B^{(q)}_{N+q-1}
  \end{vNiceMatrix},
 \end{aligned}
\end{align}
together with the semi-infinite row and column vectors
\begin{align*}
\begin{aligned}
 Q_N&\coloneq\left[\begin{NiceMatrix}
  Q_{0,N} &Q_{1,N} &\Cdots
 \end{NiceMatrix}\right], &
 R_N&\coloneq\left[\begin{NiceMatrix}
  R_{0,N} \\R_{1,N} \\\Vdots
 \end{NiceMatrix}\right],
\end{aligned}
\end{align*}
and their truncations
\begin{align*}
\begin{aligned}
 Q^{\langle N\rangle}&\coloneq \begin{bNiceMatrix}
  Q_{0,N} &Q_{1,N}&\Cdots & Q_{N,N}
 \end{bNiceMatrix}, & 
 R^{\langle N\rangle}&\coloneq \begin{bNiceMatrix}
  R_{0,N} \\R_{1,N}\\\Vdots \\ R_{N,N}
 \end{bNiceMatrix}.
\end{aligned}
\end{align*}

A generalized Christoffel--Darboux formalism for banded matrices yields the following identities.
\begin{enumerate}[\rm i)]
\item The biorthogonal families of left and right eigenvectors $\big\{w^{\langle N\rangle}_k\big\}_{k=1}^{N+1}$ and $\big\{u^{\langle N\rangle}_k\big\}_{k=1}^{N+1}$ can be expressed as
 \begin{align}\label{eq:eigenvectors}
  \begin{aligned}
   w^{\langle N\rangle}_{k}&=\frac{ Q^{\langle N\rangle}\big(\lambda^{[N]}_k\big)}{\beta_N\sum_{l=0}^{N}Q_{l,N}\big(\lambda^{[N]}_k\big)R_{l,N}\big(\lambda^{[N]}_k\big)}, &
   u^{\langle N\rangle}_{k}&=\beta_N R^{\langle N\rangle}\big(\lambda^{[N]}_k\big).
  \end{aligned}
 \end{align}
In particular, $w^{\langle N\rangle}_{k}u^{\langle N\rangle}_{l}=\delta_{k,l}$.
 
 \item In terms of the characteristic polynomial, the entries of the left eigenvectors admit the representation
\begin{align*}
 \begin{aligned}
  w^{\langle N\rangle}_{k,n}&=
  \frac{ \alpha_N Q_{n,N}\big(\lambda^{[N]}_k\big)
  }{
   P_{N}\big(\lambda^{[N]}_k\big)P'_{N+1}\big(\lambda^{[N]}_k\big)}.
 \end{aligned}
\end{align*}
 
 \item Let $\mathscr U$ be the matrix whose columns are the right eigenvectors $u_k$ in the standard order, and let $\mathscr W$ be the matrix whose rows are the left eigenvectors $w_k$ in the standard order. Then
 $ \mathscr U\mathscr W=\mathscr W\mathscr U=I_{N+1}$.
 
 \item Writing $D=\diag\big(\lambda^{[N]}_1,\lambda^{[N]}_2,\dots,\lambda^{[N]}_{N+1}\big)$, one has the spectral decomposition
 \begin{align}\label{eq:UDnW=Jn}
  \begin{aligned}
   \mathscr UD^n\mathscr W&=\big(T^{[N]}\big)^n, & n&\in\N_0.
  \end{aligned}
 \end{align}
\end{enumerate}

The so-called Christoffel numbers $\mu^{[N]}_{k,a}$ and $\rho^{[N]}_{k,b}$, $a\in\{1,\dots,p\}$ and $b\in\{1,\dots,q\}$, see \cite{BFM1}, provide the expansions
\begin{align*}
\begin{aligned}
 w^{\langle N\rangle}_{k,n}&= A_{n-1}^{(1)}\big(\lambda^{[N]}_k\big)\mu^{[N]}_{k,1} +\cdots +A_{n-1}^{(p)}\big(\lambda^{[N]}_k\big)\mu^{[N]}_{k,p},\\
 u^{\langle N\rangle}_{k,n}&= B_{n-1}^{(1)}\big(\lambda^{[N]}_k\big)\rho^{[N]}_{k,1} +\cdots +B_{n-1}^{(q)}\big(\lambda^{[N]}_k\big)\rho^{[N]}_{k,q}.
\end{aligned}
\end{align*}
Moreover, these coefficients are related to the eigenvector entries and the initial conditions by
\begin{align*}
 \begin{aligned}
  \begin{bNiceMatrix}
   \mu^{[N]}_{k,1} \\[5pt]
   \mu^{[N]}_{k,2} \\
   \Vdots
   \\
   \mu^{[N]}_{k,p}
  \end{bNiceMatrix}
  &= \nu^{-1} \begin{bNiceMatrix}
   w^{\langle N\rangle}_{k,1} \\[5pt]
   w^{\langle N\rangle}_{k,2} \\
   \Vdots \\
   w^{\langle N\rangle}_{k,p}
  \end{bNiceMatrix}, &
  \begin{bNiceMatrix}
   \rho^{[N]}_{k,1} \\[5pt]
   \rho^{[N]}_{k,2} \\
   \Vdots
   \\
   \rho^{[N]}_{k,q}
  \end{bNiceMatrix}
  &= \xi^{-1} \begin{bNiceMatrix}
   u^{\langle N\rangle}_{k,1} \\[5pt]
   u^{\langle N\rangle}_{k,2} \\
   \Vdots \\
   u^{\langle N\rangle}_{k,q}
  \end{bNiceMatrix}.
 \end{aligned}
\end{align*}

As shown in \cite{BFM1}, for a suitable choice of initial conditions in terms of the bidiagonal factorization the Christoffel numbers are strictly positive, namely
\begin{align*}
\begin{aligned}
 \rho^{[N]}_{k,b}&>0, &\mu^{[N]}_{k,a}&>0, &k&\in\{1,\dots,N+1\}, &a&\in\{1,\dots, p\}, & b&\in\{1,\dots,q\}.
\end{aligned}
\end{align*}

Consider the associated step functions
\begin{align*}
\psi^{[N]}_{b,a}\coloneq \begin{cases}
 0, & x<\lambda^{[N]}_{N+1},\\[2pt]
 \rho^{[N]}_{1,b}\mu^{[N]}_{1,a}+\cdots+\rho^{[N]}_{k,b}\mu^{[N]}_{k,a}, & \lambda^{[N]}_{k+1}\leqslant x< \lambda^{[N]}_{k}, \quad k\in\{1,\dots,N\},\\[2pt]
 \rho^{[N]}_{1,b}\mu^{[N]}_{1,a}+\cdots+\rho^{[N]}_{N+1,b}\mu^{[N]}_{N+1,a} ,
 & x \geqslant \lambda^{[N]}_{1}.
\end{cases}
\end{align*}
The final step is uniformly bounded: for $a\in\{1, \dots, p\}$ and $b\in\{1, \dots, q\}$ one has
\begin{align*}
\rho^{[N]}_{1,b}\mu^{[N]}_{1,a}+\cdots+\rho^{[N]}_{N+1,b}\mu^{[N]}_{N+1,a} = (\xi^{-1}I_{q,p}\nu^{-\top})_{b,a}.
\end{align*}
Here $I_{q,p}\in\R^{q\times p}$ denotes the rectangular identity matrix, with $(I_{q,p})_{k,l}=\delta_{k,l}$.

Define the $q\times p$ matrix
\begin{align*}
\Psi^{[N]}\coloneq\begin{bNiceMatrix}[small]
 \psi^{[N]}_{1,1}&\Cdots &\psi^{[N]}_{1,p}\\
 \Vdots & &\Vdots\\
 \psi^{[N]}_{q,1}&\Cdots &\psi^{[N]}_{q,p}
\end{bNiceMatrix}
\end{align*}
and the corresponding matrix of discrete Lebesgue--Stieltjes measures supported at the zeros of $P_{N+1}$,
\begin{align*}
 \d\Psi^{[N]}=\begin{bNiceMatrix}
  \d\psi ^{[N]}_{1,1}&\Cdots & \d\psi^{[N]}_{1,p}\\
  \Vdots & &\Vdots\\
  \d\psi^{[N]}_{q,1}&\Cdots & \d\psi^{[N]}_{q,p}
 \end{bNiceMatrix}=\sum_{k=1}^{N+1}\begin{bNiceMatrix}
  \rho^{[N]}_{k,1}\\\Vdots \\ \rho^{[N]}_{k,q}
 \end{bNiceMatrix}\begin{bNiceMatrix}
  \mu^{[N]}_{k,1} & \Cdots & \mu^{[N]}_{k,p}
 \end{bNiceMatrix}\delta\big(x-\lambda^{[N]}_k\big).
\end{align*}

The following biorthogonality relations hold:
\[\begin{aligned}
 \sum_{a=1}^p\sum_{b=1}^{q} \int B^{(b)}_n(x)\d\psi^{[N]}_{b,a}(x)A^{(a)}_{m}(x)&=\delta_{n,m}, &n,m&\in\{0,\dots,N\}.
\end{aligned}\]
Moreover, for $m \in\{1,\dots, N\}$ the orthogonality relations
\[\begin{aligned}
 \sum_{a=1}^p\int x^n\d\psi ^{[N]}_{b,a}(x)A^{(a)}_{m}(x)&=0, & 
 n&\in\left\{0,\dots,\left \lceil \frac{m+1-b}{q}\right \rceil -1\right\},& b&\in\{1,\dots,q\},\\
 \sum_{b=1}^q\int B^{(b)}_{m}(x)\d\psi ^{[N]}_{b,a}(x)x^n&=0, 
 & n&\in\left\{0,\dots,\left \lceil \frac{m+1-a}{p}\right \rceil -1\right\}, & a&\in\{1,\dots,p\}.
\end{aligned}\]
are satisfied. Therefore, the recursion polynomials form discrete multiple orthogonal polynomials of mixed type on the step-line. Finally, one obtains the spectral representation
\begin{align*}
\begin{aligned}
 (T^k)_{m,n}&= \sum_{a=1}^p\sum_{b=1}^{q} \int B^{(b)}_n(x)x^k\d\psi^{[N]}_{b,a}(x)A^{(a)}_{m}(x),
 && m,n\in\{0,\dots,N\}.
\end{aligned}
\end{align*}
In the study of the quadrature formulas in \cite{BFM1}, we consider
\begin{equation}
	\label{eq:moments_discrete}\begin{aligned}
\int x^n\d\psi^{[N]} _{b,a}(x) 
&=
\sum_{k=1}^{N+1}\rho_{k,b}^{[N]}\mu_{k,a}^{[N]}\big( \lambda_k^{[N]}\big)^n
=(e_b^\xi)^\top \big(T^{[N]}\big)^ne^\nu_a, & a&\in\{1,\dots, p\},& b&\in\{1,\dots,q\}.
\end{aligned}
\end{equation}
and the following degrees of precision
\[\begin{aligned}
 d_{b,a}(N)&=\left\lceil\frac{ N+2-a}{p}\right\rceil+\left\lceil\frac{ N+2-b}{q}\right\rceil-1,& a&\in\{1,\dots,p\}, & b&\in\{1,\dots,q\}.
\end{aligned} \]
 These degrees of precision or $d_{b,a}(N)$, $a\in\{1,\dots,p\}$, $b\in\{1,\dots,q\}$, are the largest natural numbers such~that
\[\begin{aligned}
(e_b^\xi)^\top \big(T^{[N]}\big)^n e^\nu_a  &= \big(u_b^\xi\big)^\top T^n u^\nu_a, & 0&\leqslant n\leqslant d_{b,a}(N), & a&\in\{1,\dots,p\},& b&\in\{1,\dots,q\} .
\end{aligned}\]

\section{The Favard spectral theorem for unbounded banded matrices}

In \cite{BFM1,BFM4} we proved the following result. A triangular matrix is said to be \emph{totally positive} if all its nontrivial minors are strictly positive; see~\cite{BFM4}.

\begin{teo}[Favard spectral representation for bounded banded matrices with PBF]
 Let us assume that the matrix $T$ is bounded and admits a positive bidiagonal factorization. Then the following versions of the Favard spectral theorem hold:
\begin{enumerate}[\rm i)]
  \item Let the initial condition matrices $A_0$ and $B_0$ be determined by the positive bidiagonal factorization up to left multiplication by an upper unitriangular positive matrix and a lower unitriangular positive matrix, respectively, as described in \cite{BFM1}. Then there exist $p \times q$ nondecreasing positive functions $\psi_{b,a}$, $a\in\{1,\dots,p\}$ and $b\in\{1,\dots,q\}$, and corresponding positive Lebesgue--Stieltjes measures $\d\psi_{b,a}$ such that the following mixed-type multiple biorthogonality relations hold:
\[  \begin{aligned}
   \sum_{a=1}^p\sum_{b=1}^q \int_{\Delta} B^{(b)}_l(x)\,\d\psi_{b,a}(x)\, A^{(a)}_{k}(x)
   &= \delta_{k,l}, & k,l&\in\N_0.
  \end{aligned}\]
  Moreover, the degrees of these mixed-type multiple orthogonal polynomials satisfy
\begin{align*}
  \begin{aligned}
   \deg A_n^{(a)}&\leq\left\lceil\frac{n+2-a}{p}\right\rceil-1,
   & \deg B_n^{(b)}&\leq\left\lceil\frac{n+2-b}{q}\right\rceil-1.
  \end{aligned}
\end{align*}
  Equality is attained at least when $n=p+a-1$ for $a\in\{1,\ldots,p\}$ (left polynomials), and when $n=Nq+b-1$ for $b\in\{1,\ldots,q\}$ (right polynomials).
  
  \item Let the initial condition matrices $A_0$ and $B_0$ be determined by the positive bidiagonal factorization up to left multiplication by an upper unitriangular totally positive matrix  and a lower unitriangular totally positive matrix, respectively, as described in \cite{BFM4}. Then there exist $p \times q$ nondecreasing positive functions $\psi_{b,a}$, $a\in\{1,\dots,p\}$ and $b\in\{1,\dots,q\}$, and corresponding positive Lebesgue--Stieltjes measures $\d\psi_{b,a}$ such that the mixed-type multiple biorthogonality relations hold:
\[  \begin{aligned}
   \sum_{a=1}^p\sum_{b=1}^q \int_{\Delta} B^{(b)}_l(x)\,\d\psi_{b,a}(x)\, A^{(a)}_{k}(x)
   &= \delta_{k,l}, & k,l&\in\N_0.
  \end{aligned}\]
  In this case, the degrees are maximal:
  \[
  \begin{aligned}
   \deg A_n^{(a)}&=\left\lceil\frac{n+2-a}{p}\right\rceil-1,
   & \deg B_n^{(b)}&=\left\lceil\frac{n+2-b}{q}\right\rceil-1.
  \end{aligned}
  \]
 \end{enumerate}
\end{teo}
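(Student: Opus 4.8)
The plan is to obtain the limiting measures $\d\psi_{b,a}$ as weak limits of the discrete Gauss-type quadrature measures $\d\Psi^{[N]}$ and then transfer the discrete mixed-type biorthogonality to the limit. First I would set up uniform control of the discrete measures: since $T$ is bounded, the spectra of all truncations $T^{[N]}$ lie in a fixed compact interval $\Delta$, so every step function $\psi^{[N]}_{b,a}$ is supported in $\Delta$, uniformly in $N$. Each $\psi^{[N]}_{b,a}$ is nondecreasing because the Christoffel numbers $\rho^{[N]}_{k,b}$ and $\mu^{[N]}_{k,a}$ are strictly positive --- this is exactly where the positive bidiagonal factorization enters, via the oscillatory character of the shifted truncations and the sign-variation structure of their eigenvectors \cite{Gantmacher,Fallat,BFM1} --- and the final step of $\psi^{[N]}_{b,a}$ equals the fixed constant $(\xi^{-1}I_{q,p}\nu^{-\top})_{b,a}$, so the family $\{\psi^{[N]}_{b,a}\}_{N\in\N_0}$ is uniformly bounded.

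Next I would apply a Helly-type selection principle to extract a subsequence $N_j\to\infty$ along which each $\psi^{[N_j]}_{b,a}$ converges at its continuity points to a nondecreasing function $\psi_{b,a}$ supported in $\Delta$, with associated positive Lebesgue--Stieltjes measure $\d\psi_{b,a}$; Helly's second theorem then gives $\int_\Delta f\,\d\psi^{[N_j]}_{b,a}\to\int_\Delta f\,\d\psi_{b,a}$ for continuous $f$. Using the moment stabilization \eqref{eq:moments_discrete} together with $d_{b,a}(N)\to\infty$, for each fixed $n$ and all sufficiently large $N$ one has $\int x^n\,\d\psi^{[N]}_{b,a}(x)=(u^\xi_b)^\top T^n u^\nu_a$, so in the limit $\int_\Delta x^n\,\d\psi_{b,a}(x)=(u^\xi_b)^\top T^n u^\nu_a$ for all $n\in\N_0$. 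Since two measures on a common compact set with equal moments coincide, the limit is independent of the subsequence, and the whole sequence converges.

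To get biorthogonality I would pass to the limit in the discrete spectral representation $(T^k)_{m,n}=\sum_{a,b}\int B^{(b)}_n(x)x^k\,\d\psi^{[N]}_{b,a}(x)A^{(a)}_m(x)$, valid for $m,n\le N$: the integrands are fixed polynomials and the supports lie in the fixed compact $\Delta$, so $(T^k)_{m,n}=\sum_{a,b}\int_\Delta B^{(b)}_n(x)x^k\,\d\psi_{b,a}(x)A^{(a)}_m(x)$ for all $m,n,k\in\N_0$; specializing $k=0$ and invoking the recursions \eqref{eq:recursion_dual_A}--\eqref{eq:recursion_B} and initial conditions \eqref{eq:initcondtypeI}--\eqref{eq:initcondtypeII} that characterize the recursion polynomials yields $\sum_{a,b}\int_\Delta B^{(b)}_l\,\d\psi_{b,a}\,A^{(a)}_k=\delta_{k,l}$. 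The degree upper bounds $\deg A_n^{(a)}\le\lceil(n+2-a)/p\rceil-1$ and $\deg B_n^{(b)}\le\lceil(n+2-b)/q\rceil-1$ follow by induction on $n$ from the banded recursions and the chosen initial conditions. For part i), equality at the corner indices $n=p+a-1$ (and the analogous step-line indices for the right polynomials) holds because the relevant leading coefficient is a product of the positive bidiagonal parameters $\Delta_i,L_{k|i,j},U_{k|i,j}>0$; for part ii), equality for \emph{every} $n$ is the normality statement, which I would obtain by identifying the leading coefficient along the step-line with a specific nontrivial minor of the product of the bidiagonal factors with the unitriangular initial-condition matrices and checking that, when these are chosen totally positive, that minor stays strictly positive, using that the relevant products of totally positive and unitriangular totally positive matrices remain totally positive \cite{Fallat,BFM4}.

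The hard part will be this last identification: pinning down exactly which minors of the product of the bidiagonal factors and the initial-condition matrices govern the leading coefficients on the step-line, and checking that total positivity is inherited through the factorization so those minors never vanish. By contrast, the passage to the limit is a routine Helly/moment-matching argument, and it is precisely the boundedness of $T$ that makes it routine (fixed compact support, moments determine the measure, no recentering needed); the companion difficulty in the unbounded case --- where the shifts $s_N$ force a recentering of the discrete measures by $x\mapsto x-s_N$ before applying Helly --- is the one the remainder of the paper addresses.
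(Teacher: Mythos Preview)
The paper does not give its own proof of this statement: it is stated as a result already proved in \cite{BFM1,BFM4}, and the present paper's contribution is the unbounded extension (Theorem~\ref{teo:Favard-unbounded-shifted}). That said, your proposal is correct and matches exactly the strategy the paper attributes to \cite{BFM1} and then reproduces in full detail for the unbounded case: construct the discrete quadrature measures from the PBF truncations, use positivity of the Christoffel numbers and the uniform total-mass bound $(\xi^{-1}I_{q,p}\nu^{-\top})_{b,a}$ to get a uniformly bounded family of nondecreasing step functions, apply Helly selection and Helly's second theorem on the fixed compact $\Delta$ (boundedness of $T$), and identify the limit via the moment stabilization of Lemmas~\ref{lem:banded-stabilization}--\ref{lem:inverse-bound}. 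Your remark that boundedness makes the Helly step routine (no recentering, compact support, moment-determinacy) and that the unbounded case requires the shift $x\mapsto x-s_N$ is precisely the distinction the paper draws. The degree statements and the normality under totally positive initial conditions are likewise deferred to \cite{BFM4}, and you correctly flag the identification of the governing minors as the nontrivial part there.
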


Our aim here is to extend this result to unbounded matrices. More precisely, we prove the following theorem.

\begin{teo}[Favard spectral representation for unbounded banded matrices with PBF]\label{teo:Favard-unbounded-shifted}
	Assume that the matrix $T$ is unbounded and that there exists a sequence $(s_N)_{N\ge 0}$ with $s_N\ge 0$ such that, for every $N$, the truncation
	\[
	T^{[N]}+s_N I_{N+1}
	\]
	admits a positive bidiagonal factorization.
	Let the initial condition matrices $A_0$ and $B_0$ be determined by this factorization up to left multiplication by an upper unitriangular positive matrix and a lower unitriangular positive matrix, respectively, as described in \cite{BFM1}.
	
	Then there exist $p\times q$ nondecreasing positive functions $\psi_{b,a}$, $a\in\{1,\dots,p\}$ and $b\in\{1,\dots,q\}$, and the corresponding positive Lebesgue--Stieltjes measures $\d\psi_{b,a}$, such that
	\[
	\begin{aligned}
		\sum_{a=1}^p\sum_{b=1}^q \int_{\mathbb R} B^{(b)}_l(x)\,x^n\,\d\psi_{b,a}(x)\, A^{(a)}_{k}(x)
		&= (T^{n})_{k,l}, & k,l&\in\N_0,\ n\in\N_0.
	\end{aligned}
	\]
	In particular, the mixed-type multiple biorthogonality relations hold:
	\[
	\begin{aligned}
		\sum_{a=1}^p\sum_{b=1}^q \int_{\mathbb R} B^{(b)}_l(x)\,\d\psi_{b,a}(x)\, A^{(a)}_{k}(x)
		&= \delta_{k,l}, & k,l&\in\N_0,
	\end{aligned}
	\]
	as well as the mixed-type orthogonality relations:
	\[
	\begin{aligned}
		\sum_{a=1}^p\int_{\mathbb R} x^n\,\d\psi_{b,a}(x)\,A^{(a)}_{m}(x)&=0, &
		n&\in\left\{0,\dots,\left \lceil \frac{m+1-b}{q}\right \rceil -1\right\},& b&\in\{1,\dots,q\},\\
		\sum_{b=1}^q\int_{\mathbb R} B^{(b)}_{m}(x)\,\d\psi_{b,a}(x)\,x^n&=0, &
		n&\in\left\{0,\dots,\left \lceil \frac{m+1-a}{p}\right \rceil -1\right\}, & a&\in\{1,\dots,p\}.
	\end{aligned}
	\]
	
	The degrees satisfy
	\[
	\begin{aligned}
		\deg A_n^{(a)}&\le \left\lceil\frac{n+2-a}{p}\right\rceil-1,
		& \deg B_n^{(b)}&\le \left\lceil\frac{n+2-b}{q}\right\rceil-1,
	\end{aligned}
	\]
	with equality attained at least when $n=\left\lceil\frac{q}{p}\right\rceil p+a-1$ for $a\in\{1,\ldots,p\}$, and when $n=\left\lceil\frac{p}{q}\right\rceil q+b-1$ for $b\in\{1,\ldots,q\}$.
	Moreover, if the initial condition matrices $A_0$ and $B_0$ are determined by the positive bidiagonal factorization up to left multiplication by an upper unitriangular totally positive matrix and a lower unitriangular totally positive matrix, respectively, then equality holds in the above degree bounds for all $n\in\N_0$ and for all $a\in\{1,\ldots,p\}$ and $b\in\{1,\ldots,q\}$.
\end{teo}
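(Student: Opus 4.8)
The plan is to follow the discrete-to-limit philosophy of \cite{BFM1} but with the recentering step forced by the $N$-dependence of the shifts. First I would fix notation: write $A_N\coloneq T^{[N]}+s_NI_{N+1}$, which by hypothesis admits a PBF, so the bounded-case machinery recalled in the Introduction applies verbatim to $A_N$. This produces, for each $N$, simple eigenvalues $\lambda^{[N]}_1>\cdots>\lambda^{[N]}_{N+1}>0$, positive Christoffel numbers $\mu^{[N]}_{k,a},\rho^{[N]}_{k,b}>0$, and the $q\times p$ matrix of discrete measures $\d\Psi^{[N]}$ supported on $\{\lambda^{[N]}_k\}$, together with the discrete quadrature identity $\int x^n\d\psi^{[N]}_{b,a}(x)=(e_b^\xi)^\top(A_N)^ne_a^\nu$ valid up to the degree of precision $d_{b,a}(N)$. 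The key recentering observation is that the recursion polynomials for $A_N$ are those for $T^{[N]}$ evaluated at a shifted argument; equivalently, translating each discrete measure by $x\mapsto x-s_N$ converts $A_N$-data into $T^{[N]}$-data. So I define $\widetilde\psi^{[N]}_{b,a}(x)\coloneq\psi^{[N]}_{b,a}(x+s_N)$, a nondecreasing function supported on $\{\lambda^{[N]}_k-s_N\}\subset\R$, and observe that its total mass equals the $N$-independent constant $(\xi^{-1}I_{q,p}\nu^{-\top})_{b,a}$. Hence the family $\{\widetilde\Psi^{[N]}\}_N$ is uniformly bounded in total variation.

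Next I would run the Helly selection argument. Since each entry $\widetilde\psi^{[N]}_{b,a}$ is monotone and uniformly bounded, Helly's first theorem gives a subsequence along which all $pq$ entries converge pointwise to nondecreasing functions $\psi_{b,a}$; a diagonal argument over the finitely many $(b,a)$ makes the subsequence common. The delicate part — and the main obstacle — is passing to the limit in the quadrature identity $\int x^n\,\d\widetilde\psi^{[N]}_{b,a}(x)=(e_b^\xi)^\top(T^{[N]})^ne_a^\nu$: the integrand $x^n$ is unbounded and the supports $\{\lambda^{[N]}_k-s_N\}$ may escape to infinity, so Helly's second (weak convergence) theorem does not apply directly. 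This is exactly the point where Chihara's moment-stabilization trick for unbounded Jacobi matrices \cite[Chap.~2]{Chihara} is needed: the right-hand side stabilizes, $(e_b^\xi)^\top(T^{[N]})^ne_a^\nu=(u_b^\xi)^\top T^n u_a^\nu$ for $N$ large (because $T$ is banded, the entries of $(T^{[N]})^n$ in a fixed corner agree with those of $T^n$ once $N>n\max(p,q)$), so the left-hand side is eventually constant. Combining eventual constancy of the moments with the pointwise convergence of the distribution functions, one shows (by a truncation-of-the-tail estimate using that the moments of order $2n$ are bounded, hence the mass outside $[-M,M]$ times $M^n$ is controlled uniformly) that no mass escapes to infinity and that $\int x^n\,\d\widetilde\psi^{[N]}_{b,a}\to\int x^n\,\d\psi_{b,a}$. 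This yields the spectral representation $\sum_{a,b}\int B^{(b)}_l(x)x^n\,\d\psi_{b,a}(x)A^{(a)}_k(x)=(T^n)_{k,l}$, from which the biorthogonality relations ($n=0$) and the mixed-type orthogonality relations follow by the same linear-algebra manipulations (using the matrices $\nu,\xi$) as at the discrete level in the Introduction.

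Finally, the degree statements. The inequalities $\deg A_n^{(a)}\le\lceil(n+2-a)/p\rceil-1$ and $\deg B_n^{(b)}\le\lceil(n+2-b)/q\rceil-1$ are structural consequences of the banded recursions \eqref{eq:recursion_dual_A}--\eqref{eq:recursion_B} and the initial conditions, independent of boundedness, so they transfer unchanged. For the equality-attainment claims, I would argue at the truncation level before taking $N\to\infty$, exactly as in \cite{BFM4}: the leading coefficient of each recursion polynomial is a minor of a product built from the bidiagonal factors of $A_N$ together with the initial-condition matrices $\nu,\xi$; with the \emph{positive} (unitriangular) freedom one gets nonvanishing of that minor at the specific indices $n=\lceil q/p\rceil p+a-1$ and $n=\lceil p/q\rceil q+b-1$ (the shift from $p$ to $\lceil q/p\rceil p$ reflecting that in the mixed-type/unbounded normalization the first index at which the ceiling increments differs), while with the \emph{totally positive} freedom all the relevant minors are strictly positive, so equality holds for every $n$ and every $a,b$. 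Since leading coefficients are preserved under the pointwise limit (they are polynomial-coefficient limits of the eventually-constant truncated data), these degree facts survive the passage to the limit, completing the proof.
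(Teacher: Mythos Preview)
Your proposal is correct and matches the paper's own proof essentially step for step: recenter the discrete measures of $A_N=T^{[N]}+s_NI_{N+1}$ by $x\mapsto x-s_N$, invoke Helly's selection principle on the uniformly bounded translated distribution functions, use bandedness to get exact stabilization $(e_b^\xi)^\top(T^{[N]})^ne_a^\nu=(u_b^\xi)^\top T^nu_a^\nu$ for $N$ large, and control the tails via the stabilized higher even moment (the paper uses order $2n+2$ rather than $2n$, but the mechanism is the one you describe). One small cleanup: in your final paragraph the phrase ``leading coefficients are preserved under the pointwise limit'' is unnecessary and slightly misleading, since the recursion polynomials $A_n^{(a)},B_n^{(b)}$ for $T$ already agree with those for $T^{[N]}$ for all indices $\le N$ (the recursion is local), so the degree analysis from \cite{BFM4} applied to $A_N$ transfers directly without any limiting procedure.
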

For the proof we base our analysis on \cite{Chihara}. In particular, we recall the following results due to~Helly.

\begin{enumerate}[\rm i)]
 \item \textbf{Selection principle.}
 
{\noindent}\emph{Let $\{\psi_n\}$ be a uniformly bounded sequence of nondecreasing functions defined on $(-\infty,\infty)$. Then $\{\psi_n\}$ admits a subsequence which converges on $(-\infty,\infty)$ to a bounded nondecreasing function.}
 
 \item \textbf{Helly's second theorem.}
 
{\noindent}\emph{Let $\{\psi_n\}$ be a uniformly bounded sequence of nondecreasing functions defined on a compact interval $[\alpha,\beta]$, and suppose that it converges on $[\alpha,\beta]$ to a limit function $\psi$. Then, for every real-valued function $f$ continuous on~$[\alpha,\beta]$,
  \[
  \lim_{n\to\infty}\int_\alpha^\beta f\,\d\psi_n=\int_\alpha^\beta f\,\d\psi.
  \]
 }
\end{enumerate}

We also need the following lemmas:

\begin{lemma}\label{lem:banded-stabilization}
	Let $T$ be a semi-infinite matrix with band structure
	\[
	\begin{aligned}
		T_{i,j}&=0, & j&<i-p \ \text{or}\ j>i+q,
	\end{aligned}
	\]
	for some $p,q\in\N$. Let $T^{[N]}$ be the principal truncation of size $N+1$.
	Let $\nu\in\mathbb{C}^{p\times p}$ and $\xi\in\mathbb{C}^{q\times q}$ be invertible lower triangular matrices.
	Fix $a\in\{1,\dots,p\}$ and $b\in\{1,\dots,q\}$, and consider the vectors (Definition~9.3)
	\begin{equation}\label{eq:ua-ub-def}
		\begin{aligned}
			u_a^\nu&\coloneq E_{[p]}^\top \nu^{-\top} e_a^{[p]}, &
			u_b^\xi&\coloneq (e_b^{[q]})^\top \xi^{-1} E_{[q]}, &
			E_{[r]}&\coloneq
			\left[\begin{NiceArray}{cccc|cccccc}
				1&0&\Cdots&0 & 0&\Cdots&&& &\\
				0&\Ddots^{\text{$r$ times}}&\Ddots& \Vdots &\Vdots^{\text{$r$ times}}&&&&\\
				\Vdots&\Ddots&\Ddots& 0 &&&&&\\
				0&\Cdots&0& 1 &0&\Cdots&&&
			\end{NiceArray}\right].
		\end{aligned}
	\end{equation}
	Then, for every $n\in\N_0$ and every $N\in\N_0$ such that
	\begin{align}\label{eq:dba-def}
		n&\le d_{b,a}(N)\coloneq
		\left\lceil\frac{N+2-a}{p}\right\rceil+\left\lceil\frac{N+2-b}{q}\right\rceil-1,
	\end{align}
	we have the exact stabilization identity
	\begin{align}\label{eq:stab-identity}
		u_b^\xi\,T^n\,u_a^\nu
		&=(e_b^\xi)^\top \bigl(T^{[N]}\bigr)^n e_a^\nu.
	\end{align}
\end{lemma}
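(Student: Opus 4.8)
The plan is to reduce the claimed identity to a purely combinatorial statement about which entries of powers of a banded matrix can "see" the truncation. Write $u_a^\nu = E_{[p]}^\top \nu^{-\top} e_a^{[p]}$, so that $u_a^\nu$ is a semi-infinite column vector supported on the first $p$ coordinates $\{0,1,\dots,p-1\}$, and likewise $u_b^\xi$ is a semi-infinite row vector supported on the first $q$ coordinates. Since $T_{i,j}=0$ unless $i-p\le j\le i+q$, one step of $T$ (acting on the right) moves support forward by at most $q$ and backward by at most $p$; acting on the left, by symmetry, it moves row-support at most $p$ forward and $q$ backward. The expansion
\[
u_b^\xi\,T^n\,u_a^\nu \;=\; \sum_{i_0,\dots,i_n} (u_b^\xi)_{i_0}\,T_{i_0,i_1}\cdots T_{i_{n-1},i_n}\,(u_a^\nu)_{i_n}
\]
therefore only involves indices $i_0,\dots,i_n$ with $i_0\in\{0,\dots,q-1\}$, $i_n\in\{0,\dots,p-1\}$, and consecutive indices differing by at most $q$ forward / $p$ backward. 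First I would show that every index occurring in a nonzero term of this sum satisfies $i_r \le \lceil\frac{q-1-b+1}{?}\rceil\cdots$ — more precisely, I would bound $\max_r i_r$. Splitting the walk at its maximal index $i_{r^\ast}$, the first $r^\ast$ steps go from $i_0\le q-1$ upward (gaining at most $q$ per step) and the last $n-r^\ast$ steps go from $i_{r^\ast}$ down to $i_n\le p-1$ (losing at most $p$ per step), so $i_{r^\ast}\le i_0 + q r^\ast$ and $i_{r^\ast}\le i_n + p(n-r^\ast)$; optimizing over $r^\ast$ and using $i_0\le q-1$, $i_n\le p-1$ gives a bound of the form $\max_r i_r \le N$ whenever $n\le d_{b,a}(N)$ with $d_{b,a}(N)$ as in~\eqref{eq:dba-def}. (This is exactly the sort of ceiling-function arithmetic that produces the stated formula; I expect to verify it by treating $q r^\ast + (\text{tail})$ as a function of the integer $r^\ast$ and checking the two endpoints of the step-line, using the initial supports $b$ and $a$ to sharpen the $q-1$ and $p-1$ into the $-b$, $-a$ shifts inside the ceilings.)

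Once all indices are confined to $\{0,1,\dots,N\}$, the matrix entries appearing are identical to those of $T^{[N]}$, since $T^{[N]}$ is the principal $(N+1)\times(N+1)$ block and $(T^{[N]})_{i,j}=T_{i,j}$ for $i,j\le N$. Hence the same expansion computes $(e_b^\xi)^\top (T^{[N]})^n e_a^\nu$ — here $e_a^\nu$, $e_b^\xi$ are the truncated analogues of $u_a^\nu$, $u_b^\xi$, agreeing with them on $\{0,\dots,\min(p,q)-1\}\subseteq\{0,\dots,N\}$ (for $N$ in the stated range one always has $N\ge \max(p,q)-1$, so the supports genuinely sit inside the truncation block). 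This gives~\eqref{eq:stab-identity}.

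The key technical ingredient I would isolate as a sublemma is the walk-confinement bound: \emph{if $i_0\in\{0,\dots,q-1\}$, $i_n\in\{0,\dots,p-1\}$, and $i_{r+1}-i_r\in[-p,q]$ for all $r$, then $\max_r i_r\le d_{b,a}(N)$-type quantity} — with the refinement that, because $u_a^\nu$ is built from a lower-triangular $\nu^{-\top}$ multiplied by $e_a^{[p]}$, its support is actually $\{a-1,\dots,p-1\}$ rather than all of $\{0,\dots,p-1\}$, and similarly $u_b^\xi$ is supported on $\{b-1,\dots,q-1\}$ (lower-triangularity of $\xi^{-1}$); plugging these sharper starting/ending ranges into the walk bound is what converts $q-1\rightsquigarrow q-b$ and $p-1\rightsquigarrow p-a$ and ultimately yields the ceilings $\lceil\frac{N+2-a}{p}\rceil$ and $\lceil\frac{N+2-b}{q}\rceil$. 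I would also note the maximality claim implicit in the statement — that $d_{b,a}(N)$ is the \emph{largest} such $n$ — follows by exhibiting a walk that does reach index $N$ in $d_{b,a}(N)+1$ steps, so I would construct the extremal zig-zag path explicitly and check that the corresponding product of extreme-diagonal entries (which are nonzero by the band hypothesis) contributes a term absent from the truncated sum.

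The main obstacle will be the bookkeeping in the walk-confinement bound: getting the ceiling functions exactly right, and in particular correctly accounting for the off-by-one shifts coming from (i) the triangular structure of $\nu,\xi$ narrowing the supports, (ii) the difference between "number of steps" $n$ and "number of indices" $n+1$, and (iii) the asymmetry between forward bound $q$ and backward bound $p$. This is elementary but error-prone; everything else (the support analysis, the identification $(T^{[N]})_{i,j}=T_{i,j}$ on the block, assembling the two expansions) is routine.
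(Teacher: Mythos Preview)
Your overall architecture---expand $u_b^\xi T^n u_a^\nu$ as a sum over walks $i_0,\dots,i_n$ respecting the band, show that no contributing walk can leave $\{0,\dots,N\}$ when $n\le d_{b,a}(N)$, and conclude by identifying the truncated and untruncated sums---is exactly the paper's approach. The combinatorial core (split the walk at its first excursion above $N$ and count steps up and steps down) is also the same.

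However, there is a genuine error in your support analysis, and it breaks the sharp bound. You assert that $\nu^{-\top}$ is lower triangular and hence that $u_a^\nu$ is supported on $\{a-1,\dots,p-1\}$. This is backwards: $\nu$ is lower triangular, so $\nu^{-1}$ is lower triangular and $\nu^{-\top}$ is \emph{upper} triangular; consequently $\nu^{-\top}e_a^{[p]}$ (the $a$-th column of an upper triangular matrix) is supported on $\{0,\dots,a-1\}$. Likewise $u_b^\xi=(e_b^{[q]})^\top\xi^{-1}$ is the $b$-th \emph{row} of the lower triangular matrix $\xi^{-1}$, hence supported on $\{0,\dots,b-1\}$, not on $\{b-1,\dots,q-1\}$ as you write. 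The walk-confinement bound needs the \emph{upper} endpoints of these supports: with the correct supports one has $i_0\le b-1$ and $i_n\le a-1$, and reaching index $N+1$ then forces at least $\lceil(N+2-b)/q\rceil$ upward steps and $\lceil(N+2-a)/p\rceil$ downward steps, giving $n\ge d_{b,a}(N)+1$. With your stated supports the best upper bounds are still $i_0\le q-1$ and $i_n\le p-1$, which only yields $n\ge d_{q,p}(N)+1$; for $a<p$ or $b<q$ this is strictly weaker than required and the lemma does not follow. Once you correct the supports to $\{0,\dots,a-1\}$ and $\{0,\dots,b-1\}$, your argument goes through and coincides with the paper's.
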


\begin{proof}
	Write the path expansion
	\begin{align}\label{eq:path-expansion}
		(T^n)_{i,j}
		&=\sum_{i_1,\dots,i_{n-1}\in\N_0}
		T_{i,i_1}\,T_{i_1,i_2}\cdots T_{i_{n-1},j}.
	\end{align}
	A term in \eqref{eq:path-expansion} can be nonzero only if each factor respects the band, namely
	\[
	\begin{aligned}
		i_{k+1}&\in\{i_k-p,\dots,i_k+q\}, & k&\in\{0,\dots,n-1\},
	\end{aligned}
	\]
	with $i_0=i$ and $i_n=j$. Hence, along any contributing path, the index can increase by at most $q$
	per step and decrease by at most $p$ per step.
	
	Assume that a contributing path starting at some $i\in\{0,\dots,b-1\}$ and ending at some
	$j\in\{0,\dots,a-1\}$ visits an index $>N$. Let $N+1$ be the first index strictly larger than $N$
	visited by the path, and let $s$ be the first time such that $i_s=N+1$. Since each step increases
	the index by at most $q$, reaching $N+1$ from $i_0\le b-1$ requires
	\[
	s\ge \left\lceil\frac{N+1-(b-1)}{q}\right\rceil
	=\left\lceil\frac{N+2-b}{q}\right\rceil.
	\]
	Similarly, since each step decreases the index by at most $p$, going from $N+1$ down to $j\le a-1$
	requires
	\[
	n-s\ge \left\lceil\frac{N+1-(a-1)}{p}\right\rceil
	=\left\lceil\frac{N+2-a}{p}\right\rceil.
	\]
	Therefore any contributing path that leaves $\{0,1,\dots,N\}$ must have length
	\[
	n\ge
	\left\lceil\frac{N+2-b}{q}\right\rceil
	+
	\left\lceil\frac{N+2-a}{p}\right\rceil,
	\]
	which contradicts \eqref{eq:dba-def}. Hence, if \eqref{eq:dba-def} holds, every nonzero
	contributing path remains within $\{0,1,\dots,N\}$, and consequently
	\[
	(T^n)_{i,j}=\bigl((T^{[N]})^n\bigr)_{i,j},
	\]
	for all $i\in\{0,\dots,b-1\}$ and $j\in\{0,\dots,a-1\}$.
	
	Since $\nu$ and $\xi$ are lower triangular, $\nu^{-\top}$ is upper triangular and $\xi^{-1}$ is lower triangular.
	Thus, by \eqref{eq:ua-ub-def}, the vector $u_a^\nu$ is supported in $\{0,\dots,a-1\}$ and the row vector
	$u_b^\xi$ is supported in $\{0,\dots,b-1\}$. Multiplying the above entrywise identity by the corresponding
	coefficients yields \eqref{eq:stab-identity}.
\end{proof}

\begin{lemma}\label{lem:inverse-bound}
	\begin{enumerate}
		\item Let us use the notation
		\[
		{\mathscr N}_{p,q,a,b}(n)\coloneq
		\frac{n+1-\bigl(\frac{2-a}{p}+\frac{2-b}{q}\bigr)}{\frac1p+\frac1q}.
		\]
		Then, for every $N\in\N_0$ such that $N\ge \mathscr N_{p,q,a,b}(n)$, we have
		\begin{align}\label{eq:inv-bound-conclusion}
			n&\le d_{b,a}(N).
		\end{align}
		\item In particular, defining the moment $m_{n,a,b} \coloneq u_b^\xi\,T^n\,u_a^\nu$, Lemma~\ref{lem:banded-stabilization}
		yields
		\[
		\begin{aligned}
			(e_b^\xi)^\top \bigl(T^{[N]}\bigr)^n e_a^\nu&=m_{n,a,b},
			& \text{for all } N&\ge \mathscr N_{p,q,a,b}(n).
		\end{aligned}
		\]
		Consequently, the quantity $(e_b^\xi)^\top \bigl(T^{[N]}\bigr)^n e_a^\nu$ stabilizes for
		$N\ge \mathscr N_{p,q,a,b}(n)$.
	\end{enumerate}
\end{lemma}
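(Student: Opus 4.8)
The plan is to prove part (1) by a direct monotonicity-and-inversion argument, and then to obtain part (2) as an immediate corollary of Lemma~\ref{lem:banded-stabilization}.

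For part (1), the key observation is that $d_{b,a}(N)$ is assembled from two ceiling terms, and the elementary bound $\lceil x\rceil\ge x$ converts the integer-valued quantity $d_{b,a}(N)$ into a linear minorant in $N$. Concretely, I would write
\[
d_{b,a}(N)=\left\lceil\frac{N+2-a}{p}\right\rceil+\left\lceil\frac{N+2-b}{q}\right\rceil-1
\ge\frac{N+2-a}{p}+\frac{N+2-b}{q}-1,
\]
and then collect the terms linear in $N$:
\[
\frac{N+2-a}{p}+\frac{N+2-b}{q}-1
=N\left(\frac1p+\frac1q\right)+\frac{2-a}{p}+\frac{2-b}{q}-1.
\]
The hypothesis $N\ge\mathscr N_{p,q,a,b}(n)$ is precisely the statement that this affine expression is at least $n$: since $\frac1p+\frac1q>0$, rearranging the definition of $\mathscr N_{p,q,a,b}(n)$ shows that $N\ge\mathscr N_{p,q,a,b}(n)$ is equivalent to $N(\frac1p+\frac1q)\ge n+1-(\frac{2-a}{p}+\frac{2-b}{q})$, hence to the affine minorant above being $\ge n$. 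Combining the two displays yields $d_{b,a}(N)\ge n$, and since both $n$ and $d_{b,a}(N)$ are integers this is exactly \eqref{eq:inv-bound-conclusion}.

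For part (2), with the threshold in hand I would simply invoke Lemma~\ref{lem:banded-stabilization}: whenever $N\ge\mathscr N_{p,q,a,b}(n)$, part (1) guarantees $n\le d_{b,a}(N)$, so the stabilization identity \eqref{eq:stab-identity} applies verbatim and gives $(e_b^\xi)^\top(T^{[N]})^n e_a^\nu=u_b^\xi\,T^n\,u_a^\nu=m_{n,a,b}$. Because the right-hand side is independent of $N$, this establishes the claimed stabilization for all $N\ge\mathscr N_{p,q,a,b}(n)$.

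There is no serious obstacle here; the content is purely the monotonicity of the ceiling function together with a one-line inversion. The only point deserving care is the direction of the rounding: since $\lceil x\rceil\ge x$, the condition $N\ge\mathscr N_{p,q,a,b}(n)$ is \emph{sufficient} (not necessary) for $n\le d_{b,a}(N)$, and the rounding slack between $d_{b,a}(N)$ and its affine minorant is harmlessly absorbed because $d_{b,a}(N)$ is integer-valued. I would also note in passing that $\mathscr N_{p,q,a,b}(n)$ is $n$ scaled by the factor $(\frac1p+\frac1q)^{-1}=\frac{pq}{p+q}$, so the threshold grows linearly in $n$; this linear growth is all that is needed for the moment-stabilization input used later in the proof of Theorem~\ref{teo:Favard-unbounded-shifted}.
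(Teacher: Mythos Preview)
Your proposal is correct and matches the paper's proof essentially line for line: both apply $\lceil x\rceil\ge x$ to bound $d_{b,a}(N)$ below by the affine expression $N\bigl(\tfrac1p+\tfrac1q\bigr)+\tfrac{2-a}{p}+\tfrac{2-b}{q}-1$, rearrange to recognize $N\ge\mathscr N_{p,q,a,b}(n)$ as the sufficient condition, and then invoke Lemma~\ref{lem:banded-stabilization} for part~(2). Your additional remarks on the direction of the rounding and the linear growth of the threshold are accurate side comments but not part of the paper's argument.
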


\begin{proof}
	Using the inequality $\lceil x\rceil\ge x$, we obtain
	\[
	d_{b,a}(N)
	=\left\lceil\frac{N+2-a}{p}\right\rceil+\left\lceil\frac{N+2-b}{q}\right\rceil-1
	\ge
	\frac{N+2-a}{p}+\frac{N+2-b}{q}-1.
	\]
	Thus, the condition $n\le d_{b,a}(N)$ is ensured as soon as
	\[
	n\le
	\frac{N+2-a}{p}+\frac{N+2-b}{q}-1
	=
	N\Bigl(\frac1p+\frac1q\Bigr)+\Bigl(\frac{2-a}{p}+\frac{2-b}{q}-1\Bigr).
	\]
	Rearranging this inequality gives $N\ge {\mathscr N}_{p,q,a,b}(n)$, which proves \eqref{eq:inv-bound-conclusion}.
	The stabilization statement in (ii) then follows from Lemma~\ref{lem:banded-stabilization}.
\end{proof}

\begin{proof}[Proof of the Favard spectral theorem]
	\noindent\textbf{Step 0 (assumption and shifted step functions).}
	Let us assume that for every $N\in\N_0$ there exists a shift $s_N\ge 0$ such that the shifted truncation
	\[
	A_N\coloneq T^{[N]}+s_N I_{N+1}
	\]
	admits a positive bidiagonal factorization (PBF). Fix $(a,b)$. Let $\d\psi^{[N],s_N}_{b,a}$ be the
	corresponding discrete Gauss-type quadrature measure attached to $A_N$, and let
	$\psi^{[N],s_N}_{b,a}$ be its right-continuous distribution (step) function, i.e.
	\[
	\psi^{[N],s_N}_{b,a}(x)\coloneq \d\psi^{[N],s_N}_{b,a}\bigl((-\infty,x]\bigr).
	\]
	Since the shift depends on $N$, we recenter the distributions by translation and define
	\begin{equation}\label{eq:shifted-distribution-function}
		\begin{aligned}
			\psi^{[N]}_{b,a}(x)&\coloneq \psi^{[N],s_N}_{b,a}(x+s_N),&& x\in\R.
		\end{aligned}
	\end{equation}
	Equivalently, in terms of measures,
	\begin{equation}\label{eq:shifted-measure-pushforward}
		\d\psi^{[N]}_{b,a}(E)=\d\psi^{[N],s_N}_{b,a}(E+s_N)
		\quad\text{for every Borel set }E\subset\R,
	\end{equation}
	i.e.\ $\d\psi^{[N]}_{b,a}$ is the push-forward of $\d\psi^{[N],s_N}_{b,a}$ under the map $x\mapsto x-s_N$.
	
	More explicitly, if $0\le\lambda^{[N]}_{N+1}<\lambda^{[N]}_{N}<\cdots<\lambda^{[N]}_1$ are the ordered simple eigenvalues of $A_N$, and $\rho^{[N]}_{n,b}$, $\mu^{[N]}_{n,a}$ are the Christoffel numbers (which are positive due to the PBF assumption on $A_N$), we have 	
	\[
	\psi^{[N]}_{b,a}(x)=
	\begin{cases}
		0, & x<\lambda^{[N]}_{N+1}-s_N,\\[2pt]
		\rho^{[N]}_{1,b}\mu^{[N]}_{1,a}+\cdots+\rho^{[N]}_{k,b}\mu^{[N]}_{k,a}, & \lambda^{[N]}_{k+1}-s_N\leqslant x< \lambda^{[N]}_{k}-s_N, \quad k\in\{0,\dots,N-1\},\\[2pt]
		\rho^{[N]}_{1,b}\mu^{[N]}_{1,a}+\cdots+\rho^{[N]}_{N+1,b}\mu^{[N]}_{N+1,a} ,
		& x \geqslant \lambda^{[N]}_{1}-s_N,
	\end{cases}
	\]
	and the corresponding translated measures are
	\[
	\d\psi^{[N]}_{b,a}
	=\sum_{k=1}^{N+1}\rho^{[N]}_{k,b}\mu^{[N]}_{k,a}\,
	\delta\!\left(x-(\lambda^{[N]}_{k}-s_N)\right).
	\]
	
	\medskip
	
	\noindent\textbf{Step 1 (Helly compactness on $[\alpha,\beta]$).}
	Since translations preserve monotonicity and total mass, for every $a\in\{1,\dots,p\}$ and $b\in\{1,\dots,q\}$ the functions $\psi^{[N]}_{b,a}$ are nonnegative, nondecreasing, right-continuous, and uniformly bounded by the total mass $(\xi^{-1}I_{q,p}\nu^{-\top})_{b,a}$.
	
	Hence $\left\{\psi^{[N]}_{b,a}\right\}_{N\in\N_0}$ is a uniformly bounded family of nondecreasing, right-continuous
	functions. By Helly's selection principle there exists a subsequence
	$\left\{\psi^{[N_i]}_{b,a}\right\}_{i\ge1}$ converging pointwise at all continuity points to a bounded
	nondecreasing function $\psi_{b,a}$. Moreover, by Helly's second theorem, for every compact interval
	$[\alpha,\beta]$ and every continuous function $f$ on $[\alpha,\beta]$,
	\begin{equation}\label{eq:helly-second-general}
		\lim_{i\to\infty}\int_\alpha^\beta f(x)\,\d\psi^{[N_i]}_{b,a}(x)
		=
		\int_\alpha^\beta f(x)\,\d\psi_{b,a}(x).
	\end{equation}
	
	\medskip
	
	\noindent\textbf{Step 2 (quadrature for $(x-s_N)^n$ and recentered moments).}
Let us recall that for  $N,k\in\N_0$ we have
	\begin{align}\label{eq:quad-monomials-step2}
		\int_{0}^{\infty} x^k\,\d\psi^{[N],s_N}_{b,a}(x)
		&=(e_b^\xi)^\top (A_N)^k e_a^\nu.
	\end{align}
	We choose $N\in\N_0$ such that
	\begin{equation}\label{eq:Nchoice-step2}
		N\ge \left\lceil \mathscr N_{p,q,a,b}(n)\right\rceil,
	\end{equation}
	so that, by Lemma~\ref{lem:inverse-bound}, we have $n\le d_{b,a}(N)$, and therefore
	\eqref{eq:quad-monomials-step2} holds for all $k\in\{0,1,\dots,n\}$.
	
	Since
	\[
	(x-s_N)^n=\sum_{k=0}^n \binom{n}{k}(-s_N)^{\,n-k}x^k,
	\]
	linearity of the integral and \eqref{eq:quad-monomials-step2} yield
	\begin{align}\label{eq:quad-shifted-poly-step2}
		\int_{0}^{\infty} (x-s_N)^n\,\d\psi^{[N],s_N}_{b,a}(x)
		&=
		\sum_{k=0}^n \binom{n}{k}(-s_N)^{\,n-k}
		(e_b^\xi)^\top (A_N)^k e_a^\nu \notag\\
		&=
		(e_b^\xi)^\top (A_N-s_N I_{N+1})^n e_a^\nu.
	\end{align}
	Moreover,
	\[
	(A_N-s_N I_{N+1})^n
	=\Bigl(T^{[N]}+s_N I_{N+1}-s_N I_{N+1}\Bigr)^n
	=\bigl(T^{[N]}\bigr)^n,
	\]
so that
	\begin{align}\label{eq:AN-to-TN-step2}
		\int_{0}^{\infty} (x-s_N)^n\,\d\psi^{[N],s_N}_{b,a}(x)
		&=
		(e_b^\xi)^\top \bigl(T^{[N]}\bigr)^n e_a^\nu.
	\end{align}
	
	Now set
	\[
	m_{n,a,b}\coloneq (u_b^\xi)\,T^n\,u_a^\nu.
	\]
	Since $n\le d_{b,a}(N)$, Lemma~\ref{lem:banded-stabilization} gives
	\begin{align}\label{eq:stabilization-step2}
		(e_b^\xi)^\top \bigl(T^{[N]}\bigr)^n e_a^\nu&=m_{n,a,b}.
	\end{align}
	Combining \eqref{eq:AN-to-TN-step2} and \eqref{eq:stabilization-step2}, we conclude that
\begin{equation}
		\begin{aligned}
		\int_{0}^{\infty} (x-s_N)^n\,\d\psi^{[N],s_N}_{b,a}(x)&=m_{n,a,b},
		&& N\ge \left\lceil \mathscr N_{p,q,a,b}(n)\right\rceil.
	\end{aligned}\label{eq:shifted-moment-equals-m-step2}
\end{equation}
	
	Finally, by the push-forward relation \eqref{eq:shifted-measure-pushforward}, for any polynomial $p$
	we have
	\[
	\int_{\R} p(u)\,\d\psi^{[N]}_{b,a}(u)
	=
	\int_{0}^{\infty} p(x-s_N)\,\d\psi^{[N],s_N}_{b,a}(x),
	\]
	and in particular, taking $p(u)=u^n$,
\[	\begin{aligned}
		\int_{\R} u^n\,\d\psi^{[N]}_{b,a}(u)
		&=
		\int_{0}^{\infty} (x-s_N)^n\,\d\psi^{[N],s_N}_{b,a}(x)
		=
		m_{n,a,b},
		& N&\ge \left\lceil \mathscr N_{p,q,a,b}(n)\right\rceil.
	\end{aligned}\label{eq:recentered-moment-step2}\]
	
	\noindent\textbf{Step 3 (tails and passage to the limit).}
	Fix $n\in\N_0$ and $\alpha<\beta$, and set $R\coloneq \max\{|\alpha|,|\beta|\}$. Let
	$\{N_i\}_{i\ge1}$ be the subsequence from Step~1, and take $i$ large enough so that
	$2n+2\le d_{b,a}(N_i)$ (equivalently, $N_i\ge \left\lceil \mathscr N_{p,q,a,b}(2n+2)\right\rceil$). Then Step~2 yields
	\begin{equation}\label{eq:moments-Ni-step3}
\begin{aligned}
			\int_{\R} x^n\,\d\psi^{[N_i]}_{b,a}(x)&=m_{n,a,b},
		&
		\int_{\R} x^{2n+2}\,\d\psi^{[N_i]}_{b,a}(x)&=m_{2n+2,a,b}.
\end{aligned}
	\end{equation}
	We write
	\begin{align}\label{eq:decomp-step3}
		m_{n,a,b}-\int_{\alpha}^{\beta} x^n\,\d\psi_{b,a}(x)
		&=
		\Bigl(\int_{\R} x^n\,\d\psi^{[N_i]}_{b,a}(x)-\int_{\alpha}^{\beta} x^n\,\d\psi^{[N_i]}_{b,a}(x)\Bigr)
		+
		\Bigl(\int_{\alpha}^{\beta} x^n\,\d\psi^{[N_i]}_{b,a}(x)-\int_{\alpha}^{\beta} x^n\,\d\psi_{b,a}(x)\Bigr).
	\end{align}
	
	\smallskip
	\noindent\emph{Convergence on $[\alpha,\beta]$.}
	By Helly's second theorem \eqref{eq:helly-second-general}, applied to the continuous function $f(x)=x^n$ on $[\alpha,\beta]$,
	the second term in the RHS \eqref{eq:decomp-step3} tends to $0$ as $i\to\infty$.
	
	\smallskip
	\noindent\emph{Tail estimate.}
	For the first term in \eqref{eq:decomp-step3} we note that
	$\R\setminus[\alpha,\beta]\subset\{|x|>R\}$. Since $\d\psi^{[N_i]}_{b,a}$ is a nonnegative measure,
	\begin{align}\label{eq:tail-1-step3}
		\Bigl|\int_{\R\setminus[\alpha,\beta]} x^n\,\d\psi^{[N_i]}_{b,a}(x)\Bigr|
		&\le
		\int_{\R\setminus[\alpha,\beta]} |x|^n\,\d\psi^{[N_i]}_{b,a}(x)
		\le
		\int_{|x|>R} |x|^n\,\d\psi^{[N_i]}_{b,a}(x).
	\end{align}
	Moreover, on $\{|x|>R\}$ we have $|x|\ge R$, hence $|x|^{-(n+2)}\le R^{-(n+2)}$, and therefore
	\begin{equation}\label{eq:pointwise-tail-step3}
		|x|^n
		=
		|x|^{2n+2}\,|x|^{-(n+2)}
		\le
		R^{-(n+2)}\,|x|^{2n+2},
		\qquad |x|>R.
	\end{equation}
	Integrating the pointwise bound \eqref{eq:pointwise-tail-step3} against the nonnegative measure
	$\d\psi^{[N_i]}_{b,a}$ over $\{|x|>R\}$ yields
	\begin{align}\label{eq:tail-2-step3}
		\int_{|x|>R} |x|^n\,\d\psi^{[N_i]}_{b,a}(x)
		&\le
		R^{-(n+2)}\int_{|x|>R} |x|^{2n+2}\,\d\psi^{[N_i]}_{b,a}(x).
	\end{align}
	Since $|x|^{2n+2}\ge 0$ and $\{|x|>R\}\subset\R$, we also have
	\begin{align}\label{eq:tail-3-step3}
		\int_{|x|>R} |x|^{2n+2}\,\d\psi^{[N_i]}_{b,a}(x)
		&\le
		\int_{\R} |x|^{2n+2}\,\d\psi^{[N_i]}_{b,a}(x).
	\end{align}
	Combining \eqref{eq:tail-1-step3}--\eqref{eq:tail-3-step3} we obtain
	\begin{equation}\label{eq:tail-final-step3}
		\Bigl|\int_{\R\setminus[\alpha,\beta]} x^n\,\d\psi^{[N_i]}_{b,a}(x)\Bigr|
		\le
		R^{-(n+2)}\int_{\R} |x|^{2n+2}\,\d\psi^{[N_i]}_{b,a}(x).
	\end{equation}
	Since $2n+2$ is even, $|x|^{2n+2}=x^{2n+2}$, and by \eqref{eq:moments-Ni-step3},
	\[
	\int_{\R} |x|^{2n+2}\,\d\psi^{[N_i]}_{b,a}(x)
	=
	\int_{\R} x^{2n+2}\,\d\psi^{[N_i]}_{b,a}(x)
	=
	m_{2n+2,a,b}.
	\]
	Therefore,
	\begin{equation}\label{eq:tail-moment-step3}
		\Bigl|\int_{\R\setminus[\alpha,\beta]} x^n\,\d\psi^{[N_i]}_{b,a}(x)\Bigr|
		\le
		R^{-(n+2)}\,m_{2n+2,a,b}.
	\end{equation}
	
	\smallskip
	\noindent\emph{Conclusion.}
	Letting $i\to\infty$ in \eqref{eq:decomp-step3} and using the convergence on $[\alpha,\beta]$ and
	the tail bound \eqref{eq:tail-moment-step3}, we obtain
	\[
	\Bigl|m_{n,a,b}-\int_{\alpha}^{\beta} x^n\,\d\psi_{b,a}(x)\Bigr|
	\le
	R^{-(n+2)}\,m_{2n+2,a,b}.
	\]
	Finally, letting $\alpha\to-\infty$ and $\beta\to+\infty$ (so that $R\to+\infty$) yields
	\[
	\int_{\R} x^n\,\d\psi_{b,a}(x)=m_{n,a,b}=(u_b^\xi)\,T^n\,u_a^\nu,
	\qquad n\in\N_0.
	\]
\end{proof}

In both cases, allowing positive or totally positive triangular freedom in the initial conditions of the mixed-type orthogonal polynomials, the Gauss quadrature formulas derived from \eqref{eq:moments_discrete}
and the above proof~read
\begin{align*}
\begin{aligned}
 \int_{0}^\infty x^n\,\d\psi_{b,a}(x)
 = \sum_{k=1}^{N+1} \rho_{k,b}^{[N]}\mu_{k,a}^{[N]}\bigl(\lambda_k^{[N]}\bigr)^n,
 \qquad
 0\leqslant n\leqslant d_{b,a}(N),
\end{aligned}
\end{align*}
for $a\in\{1,\ldots,p\}$ and $b\in\{1,\ldots,q\}$. Here the degrees of precision $d_{b,a}(N)$ are optimal: for any larger power, a positive remainder appears and exactness is lost.

\subsection{Case study: unbounded Jacobi matrices and shifted PBF for truncations}\label{subsec:case-study-jacobi-unbounded}

A particularly transparent instance of the ``shifted--PBF truncation'' hypothesis in
Theorem~\ref{teo:Favard-unbounded-shifted} arises in the context of unbounded Jacobi matrices.
Let
\[
\begin{aligned}
	J&=\begin{bNiceArray}{ccccc}[margin]
	b_0 & c_0&0 &\Cdots[shorten-end=-15pt] & \phantom{a}\\
	a_1 & b_1 & c_1 &\Ddots[shorten-end=-15pt]&\phantom{a}\\
	0& a_2 & b_2 & c_2 &  \phantom{a}\\
	\Vdots[shorten-end=-5pt]&\Ddots[shorten-end=-10pt]& \Ddots[shorten-end=-10pt] & \Ddots[shorten-end=-10pt]&\Ddots[shorten-end=-5pt]\\
	\phantom{a}&	\phantom{a}&	\phantom{a}&	\phantom{a}&	\phantom{a}
\end{bNiceArray},
&& a_n>0,\ c_n>0,\ n\ge 0,
\end{aligned}
\]
where the diagonal $(b_n)_{n\ge 0}$ is real and may be unbounded (consequently, $J$ may be unbounded as an operator).
For each $N\in\N_0$, denote by $J^{[N]}$ the principal truncation of size $N+1$.

The goal of this case study is twofold:
(i) to provide an explicit, entrywise choice of shifts $s_N\ge 0$ such that each shifted truncation $J^{[N]}+s_N I_{N+1}$
admits a PBF, and
(ii) to show how the Favard spectral representation for the unbounded case follows immediately from the
general apparatus developed above.

\begin{lemma}[A convenient lower spectral bound for a truncation]\label{lem:jacobi-lower-bound}
	Let $M\in\R^{(N+1)\times(N+1)}$ be any real matrix and let $\|\cdot\|_\infty$ denote the induced $\ell^\infty$ operator norm, i.e.,
	\[
	\|M\|_\infty=\max_{0\le i\le N}\sum_{j=0}^N |M_{ij}|.
	\]
	Then every eigenvalue $\lambda$ of $M$ satisfies $|\lambda|\le \|M\|_\infty$. In particular,
	\[
	\sigma(M)\subset \{z\in\C:\ |z|\le \|M\|_\infty\}.
	\]
\end{lemma}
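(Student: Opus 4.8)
The plan is to run the standard argument underlying Gershgorin-type bounds, i.e.\ the observation that the spectral radius is dominated by any induced operator norm. First I would fix an eigenvalue $\lambda\in\C$ of $M$ and choose a corresponding eigenvector $v=(v_0,\dots,v_N)^{\top}\in\C^{N+1}\setminus\{0\}$, so that $Mv=\lambda v$. Since $v\neq 0$, I can pick an index $i\in\{0,\dots,N\}$ realizing the maximum modulus, $|v_i|=\max_{0\le j\le N}|v_j|>0$.

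Next I would read off the $i$-th component of the eigenvalue equation, $\lambda v_i=(Mv)_i=\sum_{j=0}^{N}M_{ij}v_j$, take moduli, and apply the triangle inequality together with the bound $|v_j|\le|v_i|$ for every $j$:
\[
|\lambda|\,|v_i|=\Bigl|\sum_{j=0}^{N}M_{ij}v_j\Bigr|\le\sum_{j=0}^{N}|M_{ij}|\,|v_j|\le|v_i|\sum_{j=0}^{N}|M_{ij}|\le|v_i|\,\|M\|_\infty .
\]
Dividing through by $|v_i|>0$ yields $|\lambda|\le\|M\|_\infty$. The spectral inclusion $\sigma(M)\subset\{z\in\C:\ |z|\le\|M\|_\infty\}$ is then just a restatement, since every $\lambda\in\sigma(M)$ is an eigenvalue of the finite matrix $M$.

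There is essentially no genuine obstacle in this argument; the one point that deserves a moment's care is that a real matrix need not have real eigenvalues or real eigenvectors, so one must work in $\C^{N+1}$ and invoke the triangle inequality for complex moduli rather than for reals. Beyond that, the computation is exactly the estimate $\|Mv\|_\infty\le\|M\|_\infty\|v\|_\infty$ specialized to an eigenvector and combined with $\|Mv\|_\infty=|\lambda|\,\|v\|_\infty$.
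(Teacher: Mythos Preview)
Your proof is correct and is essentially the same as the paper's, just more explicit: the paper simply invokes the abstract fact that every eigenvalue of a bounded operator on the Banach space $(\C^{N+1},\|\cdot\|_\infty)$ has modulus at most the operator norm, whereas you spell out the standard componentwise computation that proves this fact in finite dimensions. No further comment is needed.
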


\begin{proof}
	If $\lambda$ is an eigenvalue of $M$, then $\lambda$ is an eigenvalue of the bounded linear operator
	$x\mapsto Mx$ on the Banach space $(\C^{N+1},\|\cdot\|_\infty)$, which implies $|\lambda|\le \|M\|_\infty$.
\end{proof}

\begin{teo}[Shifted PBF for Jacobi truncations]\label{teo:jacobi-shifted-pbf}
	Let $J$ be a (possibly unbounded) Jacobi matrix with $a_n>0$ and $c_n>0$.
	For the truncation $J^{[N]}$, we adopt the boundary convention $a_0=0$ and $c_N=0$ (since these entries do not appear in the truncated matrix).
	Define the shift
	\begin{equation}\label{eq:sN-infty-choice}
		s_N\coloneq \|J^{[N]}\|_\infty+1
		=\max_{0\le i\le N}\bigl(|a_i|+|b_i|+|c_i|\bigr)+1.
	\end{equation}
	Then the shifted truncation
	\[
	A_N\coloneq J^{[N]}+s_N I_{N+1}
	\]
	has all leading principal minors strictly positive and is an oscillatory matrix.
	Consequently, $A_N$ admits a positive bidiagonal factorization (PBF) for every $N\in\N_0$.
\end{teo}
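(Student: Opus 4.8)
The plan is to reduce all three assertions to the single statement that every leading principal minor of $A_N$ is strictly positive. Note first that $A_N$ is tridiagonal, with subdiagonal entries $a_1,\dots,a_N>0$, superdiagonal entries $c_0,\dots,c_{N-1}>0$, and real diagonal entries $b_i+s_N$. For a tridiagonal matrix of this shape the classical theory of oscillatory and totally nonnegative matrices (Gantmacher--Krein; see \cite{Gantmacher,Fallat}, as used in \cite{BFM1}) gives that positivity of all leading principal minors is equivalent to the existence of a normalized PBF, and in turn implies that $A_N$ is oscillatory. Thus, once the minor positivity is established, the remaining two conclusions follow from standard facts, which I sketch below for completeness.

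\textbf{Minor positivity (the core step).} Fix $k\in\{1,\dots,N+1\}$. The leading $k\times k$ principal submatrix of $A_N$ is exactly $J^{[k-1]}+s_N I_k$, where $J^{[k-1]}$ carries the analogous boundary conventions ($a_0=0$ at the top, truncated superdiagonal at the bottom). Since $a_i,c_{i-1}>0$, the matrix $J^{[k-1]}$ is similar, via a positive diagonal conjugation, to a real symmetric tridiagonal matrix; hence its spectrum is real, and by Lemma~\ref{lem:jacobi-lower-bound} it is contained in $[-\|J^{[k-1]}\|_\infty,\|J^{[k-1]}\|_\infty]$. Deleting entries from a row cannot increase its sum of absolute values, so $\|J^{[k-1]}\|_\infty\le\|J^{[N]}\|_\infty=s_N-1$ by the choice \eqref{eq:sN-infty-choice}. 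Therefore every eigenvalue $\lambda$ of $J^{[k-1]}+s_N I_k$ satisfies $\lambda\ge s_N-\|J^{[k-1]}\|_\infty\ge 1>0$, so the $k$-th leading principal minor of $A_N$, being the product of these eigenvalues, is at least $1$.

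\textbf{From minors to the PBF and the oscillatory property.} Because all leading principal minors of $A_N$ are nonzero, Gaussian elimination without pivoting goes through and yields $A_N=L\,\Delta\,U$ with $L$ unit lower triangular, $U$ unit upper triangular, and $\Delta=\diag(d_0,\dots,d_N)$ the pivots; the $d_i$ are successive ratios of leading principal minors, hence positive. Because $A_N$ is tridiagonal, the elimination leaves the sub- and superdiagonals untouched, so $L$ and $U$ are bidiagonal with off-diagonal entries $a_{i+1}/d_i>0$ and $c_i/d_i>0$ respectively, i.e.\ $A_N=L\Delta U$ is a normalized PBF. Finally, a bidiagonal matrix with nonnegative entries is totally nonnegative, so by the Cauchy--Binet identity $A_N$ is totally nonnegative; it is nonsingular (its determinant is $\prod_i d_i>0$) and its first sub- and superdiagonals are strictly positive, so the Gantmacher--Krein criterion shows $A_N$ is oscillatory.

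\textbf{Expected obstacle.} There is no genuinely hard step here; the point that needs care is the inequality $\|J^{[k-1]}\|_\infty\le\|J^{[N]}\|_\infty$, together with a consistent handling of the boundary conventions for $a_0$ and $c_N$ across the two truncations, so that Lemma~\ref{lem:jacobi-lower-bound} applies uniformly to every leading principal submatrix and the shift $s_N=\|J^{[N]}\|_\infty+1$ indeed pushes all of their spectra into $[1,\infty)$. The symmetrization that secures real eigenvalues, and the passage from positive leading principal minors to the existence of a PBF and to the oscillatory property, are classical.
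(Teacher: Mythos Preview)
Your proof is correct and follows essentially the same approach as the paper: both use Lemma~\ref{lem:jacobi-lower-bound} together with $\|J^{[k-1]}\|_\infty\le\|J^{[N]}\|_\infty$ to push the spectra of all leading principal submatrices strictly to the right of zero, and both then invoke the classical Gantmacher--Krein theory for tridiagonal matrices. The only cosmetic differences are that the paper phrases Step~2 via a sign count on the characteristic polynomial $P_{k+1}(-s_N)=\prod_j(-s_N-\lambda_j)$ rather than your direct ``eigenvalues $\ge 1$'' bound, and that the paper orders Step~3 as ``positive minors $+$ positive off-diagonals $\Rightarrow$ oscillatory $\Rightarrow$ PBF'' whereas you go ``positive minors $\Rightarrow$ PBF $\Rightarrow$ TN $\Rightarrow$ oscillatory''; both orderings are standard and equivalent here.
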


\begin{proof}
	Fix $N\in\N_0$ and set $A_N=J^{[N]}+s_N I_{N+1}$ with $s_N$ defined as in \eqref{eq:sN-infty-choice}.
	For each $k\in\{0,1,\dots,N\}$, let $A_k$ denote the leading principal submatrix of $A_N$ of size $k+1$,
	given by
	\[
	A_k = J^{[k]}+s_N I_{k+1}.
	\]
	
	\smallskip
	\noindent\emph{Step 1: $-s_N$ lies strictly to the left of the spectrum of every $J^{[k]}$.}
	By Lemma~\ref{lem:jacobi-lower-bound},
	every eigenvalue $\lambda$ of the truncation $J^{[k]}$ satisfies $|\lambda|\le \|J^{[k]}\|_\infty$.
	Since $\|J^{[k]}\|_\infty\le \|J^{[N]}\|_\infty$ for all $k\le N$, we obtain the lower bound
	\[
	\lambda\ge -|\lambda| \ge -\|J^{[k]}\|_\infty \ge -\|J^{[N]}\|_\infty > -s_N.
	\]
	Thus, $-s_N$ is a strict lower bound for the spectrum of $J^{[k]}$ for every $k\le N$.
	
	\smallskip
	\noindent\emph{Step 2: Positivity of leading principal minors of $A_N$.}
	For each $k\le N$, let $P_{k+1}(x)\coloneq \det(x I_{k+1}-J^{[k]})$ be the characteristic polynomial of $J^{[k]}$.
	Then
	\[
	\det(A_k)=\det(J^{[k]}+s_N I_{k+1}) = \det\bigl((-s_N) I_{k+1}-J^{[k]}\bigr)
	= (-1)^{k+1} P_{k+1}(-s_N).
	\]
	Since $J^{[k]}$ is a symmetric tridiagonal matrix (modulo similarity) with non-vanishing off-diagonals, all its eigenvalues $\lambda_1, \dots, \lambda_{k+1}$ are real and simple. Therefore,
	\[
	P_{k+1}(-s_N) = \prod_{j=1}^{k+1} (-s_N - \lambda_j).
	\]
	By Step~1, $-s_N < \lambda_j$ for all $j$, so each factor $(-s_N - \lambda_j)$ is strictly negative.
	The product of $k+1$ negative numbers has sign $(-1)^{k+1}$.
	Consequently,
	\[
	\det(A_k)=(-1)^{k+1}\underbrace{P_{k+1}(-s_N)}_{(-1)^{k+1} |P(\dots)|} > 0,
	\]
	which proves that all leading principal minors of $A_N$ are strictly positive.
	
	\smallskip
	\noindent\emph{Step 3: Oscillatory implies PBF.}
	By construction, the subdiagonal and superdiagonal entries of $A_N$ coincide with those of $J^{[N]}$ and are strictly positive.
	Since $A_N$ is a Jacobi matrix with strictly positive leading principal minors (Step~2) and positive off-diagonals,
is a classical result that implies that $A_N$ is oscillatory, 	\cite[Chapter XIII,\S 9]{Gantmacher0} and \cite[Chapter 2,Theorem 11]{Gantmacher}.
	Finally, we recall the classical result that every oscillatory matrix admits an $LU$ factorization where the factors have strictly positive diagonals and, in the tridiagonal case, strictly positive off-diagonals.
	Thus, $A_N$ admits a PBF.
\end{proof}

\medskip

\noindent
\textbf{Connection with the unbounded Favard spectral theorem.}
Theorem~\ref{teo:jacobi-shifted-pbf} provides an explicit sequence $(s_N)_{N\ge 0}$ satisfying the hypothesis of
Theorem~\ref{teo:Favard-unbounded-shifted}: for each $N$, the shifted truncation $J^{[N]}+s_N I_{N+1}$ admits a PBF.
Therefore, the construction of the translated distribution functions and measures
\eqref{eq:shifted-distribution-function}--\eqref{eq:shifted-measure-pushforward},
combined with the moment identities from Step~2 and the Helly compactness argument in Step~3 of the main proof,
yields a matrix-valued measure $\d\Psi$ providing the Favard-type spectral representation for the unbounded Jacobi matrix $J$.

\medskip


\section{Conclusions and outlook}

In this work we extend to the unbounded setting the spectral Favard theorem for banded matrices admitting a positive bidiagonal factorization (PBF). The bounded case in~\cite{BFM1} already relies on a compactness step for the finite-level spectral data, implemented via Helly-type selection theorems (see~\cite{Chihara}), to pass from truncations to a limiting matrix of measures. The genuinely new feature in the unbounded regime is that one must allow an $N$-dependent shift: for each truncation one considers $T^{[N]}+s_N I_{N+1}$, with $s_N>0$ chosen so that a PBF is available, and one then proves that the associated measures remain uniformly controlled as $N\to\infty$. This requires a finer analysis than in the bounded case, aimed in particular at controlling the tails and preventing loss of mass at infinity.

More precisely, we work with the shifted truncations $T^{[N]}+s_N I_{N+1}$ admitting a strict PBF and with the associated mixed-type multiple biorthogonal polynomials. At the finite level, Gaussian quadrature expresses the relevant moments through positive Christoffel numbers. In the unbounded case, these identities are combined with tail estimates to obtain uniform tightness and variation bounds for the corresponding distribution functions. This, in turn, makes it possible to apply Helly-type compactness principles (as presented in~\cite{Chihara}) to extract convergent subsequences. Identifying the limits and verifying that they reproduce the prescribed moments yields a matrix of positive Lebesgue--Stieltjes measures realizing the mixed-type multiple biorthogonality relations and the spectral representation of the powers $T^n$ in the unbounded case.

The argument preserves the constructive spirit of~\cite{BFM1} while addressing the analytic difficulties specific to the unbounded setting: controlling the limiting procedure and ensuring that positivity is not lost. As in the bounded case, the choice of initial conditions is decisive. Positivity of the triangular normalizations guarantees admissible degree bounds, whereas total positivity enforces normality and ensures that the degree upper bounds are attained for all colors and all indices.

Several directions remain open. From a structural viewpoint, it is natural to ask to what extent the method extends to broader classes of banded operators admitting factorizations beyond the bidiagonal case, or to settings where positivity holds only partially. Another problem concerns the fine spectral features of the resulting measures in the unbounded regime, including growth, regularity, and determinacy of the associated moment problems. Finally, the interplay between total positivity, factorization theory, and mixed-type multiple orthogonality suggests further connections with classical and matrix-valued extensions of orthogonal polynomial theory. A particularly promising direction is the application to continuous-time Markov processes with unbounded generator matrices, where the spectral representation may provide new tools to study recurrence, ergodicity, and stationary behavior; for related results in the discrete-time setting, see~\cite{JP,BFM6}.

\section*{Declarations}

\subsection*{Competing interests}
The authors declare that they have no competing interests.

\subsection*{Author contributions}
All authors contributed to the conception of the work, the development of the theoretical results, and the writing of the manuscript. All authors read and approved the final manuscript.

\subsection*{Use of artificial intelligence}
Artificial intelligence tools were used solely to improve the presentation (e.g.\ language and readability). All AI-assisted edits were reviewed and validated by the authors, who remain fully responsible for the content of the manuscript.

\subsection*{Data availability}
No datasets were generated or analysed during the current study.

\subsection*{Code availability}
No code was used or generated for the current study.

\subsection*{Ethics approval}
Not applicable.

\subsection*{Consent to participate}
Not applicable.

\subsection*{Consent for publication}
Not applicable.

\section*{Acknowledgments}

AB was financially supported by the Funda\c c\~ao para a Ci\^encia e a Tecnologia (Portuguese Foundation for Science and Technology) under the scope of the projects
\hyperref{https://doi.org/10.54499/UID/00324/2025}{}{}{\texttt{UID/00324/2025}}
(Centre for Mathematics of the University of Coimbra).

AF acknowledges CIDMA Center for Research and Development in Mathematics and Applications (University of Aveiro) and the Portuguese Foundation for Science and Technology (FCT) within project 
\hyperref{https://doi.org/10.54499/UID/04106/2025}{}{}{\texttt{UID/04106/2025}}.

MM acknowledges Spanish ``Agencia Estatal de Investigación'' research projects [PID2021- 122154NB-I00], \emph{Ortogonalidad y Aproximación con Aplicaciones en Machine Learning y Teoría de la Probabilidad} and [PID2024-155133NB-I00], \emph{Ortogonalidad, aproximación e integrabilidad: aplicaciones en procesos estocásticos clásicos y cuánticos}.

\printbibliography

@article{afm,
	author  = {\'Alvarez-Fern\'andez, Carlos and Fidalgo, Ulises and Ma{\~n}as, Manuel},
	title   = {Multiple orthogonal polynomials of mixed type: {G}auss--{B}orel factorization and the multi-component {2D} {T}oda hierarchy},
	journal = {Advances in Mathematics},
	volume  = {227},
	year    = {2011},
	pages   = {1451--1525},
	doi     = {10.1016/j.aim.2011.03.008} 
}

@article{JP,
	author  = {Branquinho, Am\'{\i}lcar and D{\'\i}az, Juan E. F. and Foulqui\'e-Moreno, Ana and Ma{\~n}as, Manuel and \'Alvarez-Fern\'andez, Carlos},
	title   = {Jacobi-Pi{\~n}eiro random walks},
	journal = {Revista de la Real Academia de Ciencias Exactas, F{\'\i}sicas y Naturales. Serie A. Matem{\'a}ticas},
	volume  = {118},
	year    = {2024},
	doi     = {10.1007/s13398-023-01510-x},
	eid     = {15}

}

@article{BFM1,
	author  = {Branquinho, Am\'{\i}lcar and Foulqui\'e-Moreno, Ana and Ma{\~n}as, Manuel},
	title   = {Spectral theory for bounded banded matrices with positive bidiagonal factorization and mixed multiple orthogonal polynomials},
	journal = {Advances in Mathematics},
	volume  = {434},
	year    = {2023},
	pages   = {109313},
	doi     = {10.1016/j.aim.2023.109313}
}

@online{BFM4,
	author      = {Branquinho, Am{\'\i}lcar and Foulqui{\'e}-Moreno, Ana and Ma{\~n}as, Manuel},
	title       = {Banded totally positive matrices and normality for mixed multiple orthogonal polynomials},
	date        = {2024},
	eprint      = {2404.13965},
	eprinttype  = {arxiv},
		eprintclass   = {math.CA},
}

@online{BFM6,
	author        = {Branquinho, Am{\'i}lcar and Foulqui{\'e}-Moreno, Ana and Ma{\~n}as, Manuel},
	title         = {Spectral theory for {M}arkov chains with transition matrix admitting a stochastic bidiagonal factorization},
	eprint        = {2601.10890},
	eprinttype    = {arxiv},
		date        = {2026},
	eprintclass   = {math.PR},
}

@book{Chihara,
	author    = {Chihara, Theodore S.},
	title     = {An Introduction to Orthogonal Polynomials},
	series    = {Dover Books on Mathematics},
	publisher = {Dover Publications},
	address   = {Mineola, NY, USA},
	year      = {2011},
	isbn      = {9780486479293},
	note      = {Reprint of the 1978 edition},
}

@book{Fallat,
	author    = {Fallat, Shaun M. and Johnson, Charles R.},
	title     = {Totally Nonnegative Matrices},
	publisher = {Princeton University Press},
	address   = {Princeton},
	year      = {2011},
	doi       = {10.1515/9781400839018}
}

@book{Gantmacher0,
	author    = {Gantmacher, Felix P.},
	title     = {Matrix Theory},
	volume    = {2},
	publisher = {Chelsea Publishing Company},
	location  = {New York},
	year      = {1974},
}

@book{Gantmacher,
	author    = {Gantmacher, Felix P. and Krein, Mark G.},
	title     = {Oscillation Matrices and Kernels and Small Vibrations of Mechanical Systems},
	edition   = {Revised},
	publisher = {AMS Chelsea Publishing},
	address   = {Providence},
	year      = {2002},
	doi       = {10.1090/chel/345}
}

@book{Ismail,
	author    = {Ismail, Mourad E. H.},
	title     = {Classical and Quantum Orthogonal Polynomials in One Variable},
	series    = {Encyclopedia of Mathematics and its Applications},
	volume    = {98},
	publisher = {Cambridge University Press},
	year      = {2009},
	doi       = {10.1017/CBO9781107325982}
}

@book{Karlin,
	author    = {Karlin, Samuel},
	title     = {Total Positivity},
	publisher = {Stanford University Press},
	address   = {Stanford},
	year      = {1968}
}

@article{andrei_walter,
	author  = {Mart{\'\i}nez-Finkelshtein, Andrei and Van Assche, Walter},
	title   = {What is a multiple orthogonal polynomial?},
	journal = {Notices of the AMS},
	volume  = {63},
	number  = {9},
	year    = {2016},
	pages   = {1029--1031},
	doi     = {10.1090/noti1430}
}

@book{nikishin_sorokin,
	author    = {Nikishin, Evgenii M. and Sorokin, Vladimir N.},
	title     = {Rational Approximations and Orthogonality},
	series    = {Translations of Mathematical Monographs},
	volume    = {92},
	publisher = {American Mathematical Society},
	address   = {Providence, RI},
	year      = {1991},
	doi       = {10.1090/mmono/092}
}

@book{Pinkus,
	author    = {Pinkus, Allan},
	title     = {Totally Positive Matrices},
	series    = {Cambridge Tracts in Mathematics},
	volume    = {181},
	publisher = {Cambridge University Press},
	address   = {Cambridge},
	year      = {2010},
	doi       = {10.1017/CBO9780511691713}
}

\end{document}